\DeclareMathOperator*{\ddim}{dim}
\DeclareMathOperator*{\Var}{Var}
\DeclareMathOperator*{\support}{supp}
\DeclareMathOperator*{\Card}{Card}
\DeclareMathOperator{\arcos}{arcos}
\DeclareMathOperator*{\graph}{Graph}
\DeclareMathOperator*{\Tan}{Tan}
\DeclareMathOperator*{\Nor}{Nor}
\DeclareMathOperator*{\Reach}{Reach}
\DeclareMathOperator*{\epi}{epi}
\DeclareMathOperator*{\Lip}{Lip}
\DeclareMathOperator*{\Jac}{Jac}
\DeclareMathOperator*{\pr}{proj}
\DeclareMathOperator*{\Range}{Range}
\DeclareMathOperator*{\dist}{dist}
\newcommand{\C}{\mathbb{C}}
\newcommand{\R}{\mathbb{R}}
\newcommand{\N}{\mathbb{N}}
\newcommand{\Graph}[1]{\graph(#1)}
\renewcommand{\phi}{\varphi}
\renewcommand{\wp}{\mathscr P}
\newtheorem{theorem}{Theorem}[section]
\newtheorem{cor}{Corollary}[theorem]
\newtheorem{proposition}{Proposition}[section]
\newtheorem{definition}[theorem]{Definition}
\newtheorem{remark}[theorem]{Remark}
\author{Federico Piazzon}
\address{room 712 Department of Mathematics, Universit\'a di Padova, Italy. Phone +39 0498271260}
\email{\underline{fpiazzon@math.unipd.it}} 
\urladdr{http://www.math.unipd.it/~fpiazzon/   (work in progress)}
\subjclass{}
\keywords{ }
\thanks{Supported by INdAM GNCS and Doctoral School on Mathematical Sciences Univ. of Padua}
\date{\today}
\title[Optimal AM]{Optimal Polynomial Admissible Meshes on Some Classes of Compact Subsets of $\R^d$}
\begin{document}
\begin{abstract}
We show that any compact subset of $\R^d$ which is the closure of a bounded star-shaped Lipschitz domain $\Omega$, such that $\complement \Omega$ has positive reach in the sense of Federer, admits an \emph{optimal AM} (admissible mesh), that is a sequence of
polynomial norming sets with optimal cardinality. This extends a recent result of A. Kro\'o on $\mathscr C^ 2$ star-shaped domains.

Moreover, we prove constructively the existence of an optimal AM for
any $K := \overline\Omega \subset \R^ d$ where $\Omega$ is a bounded $\mathscr C^{ 1,1}$ domain. This is done by a particular multivariate sharp version of the Bernstein Inequality via the distance function.
\end{abstract}
\maketitle
\tableofcontents
\section{Introduction}
Let us denote by $\wp^n(\R^d)$ the space of polynomials of d real variables
having degree at most $n$. We recall that a compact set $K \subset\R^d$ is said to be polynomial determining if any polynomial vanishing on $K$ is necessarily the
null polynomial.

Let us consider a polynomial determining compact set $K \subset\R^d$ and let
$A_n$ be a subset of $K$. If there exists a positive constant $C_n$ such that for any
polynomial $p \in \wp^n(\R^d)$ the following inequality holds
\begin{equation}
\|p\|_K\leq C_n \|p\|_{A_n},\label{amdef}
\end{equation}
then $A_n$ is said to be a \emph{norming set of constant} $C_n$ \emph{for} $\wp^n (\R^d ).$ Here and throughout the paper we use this notation: $\|f\|_X := \sup_{x\in X}|f (x)|$ for any bounded function on $X.$

Let $\{A_n \}$ be a sequence of norming sets for $\wp^n (\R^d )$ with constants $\{C_n \}$, and suppose that both $C_n$ and $\Card(A_n)$ grow at most polynomially with $n$ (i.e., $\max\{C_n , \Card(A_n )\} = O(n^s)$ for a suitable $s \in \N$), then $\{A_n\}$ is said to be a \emph{weakly admissible mesh} (WAM) for K; see\footnote{The original definition in \cite{CL08} is actually a little weaker (sub-exponential growth instead of polynomial growth is allowed), here we prefer to use the present one which is the most common in the literature.} \cite{CL08}. If $C_n \leq C$ $\forall n$, then $\{A_n \}_\N$ is said an \emph{admissible mesh} (AM) for $K$; in the sequel, with a little abuse of notation, we term (weakly) admissible mesh not only the whole sequence but also its $n$-th element $A_n$.

Observe that necessarily
$$\Card A_n\geq N=\ddim\wp^n(\R^d)={{n+d}\choose{n}}=O(n^d)$$
since a (W)AM is $\wp^n(\R^d )$-determining, i.e., any polynomial in $\wp^n(\R^d )$ vanishing on $A_n$ is the zero polynomial. When $\Card(A_n) =O(n^d)$, following Kro\'o \cite{KRRE11}, we speak of an optimal admissible mesh.

We recall that AMs are preserved by affine transformations and can be
constructed incrementally by finite union and product. Moreover they are
stable under small perturbations and smooth mappings; see \cite{PV10} and \cite{PV11}. For a survey on WAMs properties and applications we refer to \cite{BDSV10}.

The study of AMs has several computational motivations. Indeed, it has
been proved by Calvi and Levenberg that discrete least squares polynomial
approximations based on (W)AMs are nearly optimal in the uniform norm,
see \cite[Thm. 1]{CL08}. Moreover, discrete extremal sets extracted from (W)AMs (see for instance \cite{BDSV10},\cite{BV11}), are known to be good interpolation sets and to behave asymptotically like Fekete points, namely the corresponding sequences of uniform probability measures converge weak$^{\ast}$ to the pluripotential equilibrium measure of the underlying compact set; see \cite{BB08} \cite{BB11} or the survey \cite{LevBB10}.
We recall that it is possible to construct an admissible mesh with $O(n^{ rd})$ points on any real compact set satisfying a Markov Inequality \cite{BOMI93} with exponent $r.$ The mesh can be obtained by intersecting the compact set with a uniform grid having $O(n^{-r} )$ step size by \cite[Thm. 5]{CL08}.

Indeed, the hypotheses of \cite[Thm. 5]{CL08} are not too restrictive. For instance
one has a Markov Inequality with exponent $2$ for any compact set $K \subset
\R^d$ satisfying a uniform cone condition \cite{Bau}. Thus also for the closure of any bounded Lipschitz domain. However a Markov Inequality holds with an
exponent possibly greater than 2 even for more general classes of sets; see
\cite{PAPLE86} and \cite{PAPLE86b} for details.

The cardinality growth order of AMs built by this procedure, however, causes severe computational drawbacks already for $d=2.$ This gives a strong practical motivation to construct low-cardinality admissible meshes, in particular optimal ones.

It has been proved in \cite{LevSur} that for any compact polynomial determining $K \subset \C^d$ there exists an admissible mesh with $O((n \log n)^ d )$ cardinality, unfortunately the method relies on the determination of Fekete points, which are not known in general and whose construction is an extremely hard task.

In order to build meshes with nearly optimal cardinality growth order one can restrict his attention to sets with simple geometry such as simplices, squares, balls and their images under any polynomial map (see for instance \cite{BCLSV10}) or can look at some specific geometric-analytic classes of sets; the present paper follows the latter idea.

In \cite{KR13} the author proves that any compact star-shaped set $K \subset \R^d$ with Minkowski Functional (see for instance \cite[pg. 6]{Br}) having $\alpha$-Lipschitz gradient has an admissible mesh $\{Y_n\}$ with
$$\Card Y_n = O(n^\frac{2d+\alpha-1}{\alpha+1}).$$

In particular he notices that this implies the existence of optimal AMs
for the closure of any $\mathscr C^ 2$ star-shaped bounded domain.

While writing this paper we received a new preprint (now
published) by A. Kro\'o where the author improves his estimate above by a fine use of Minkowski Functional smoothness; \cite[Theorem 3]{KR13}.

In \cite{KRRE11} he also conjectured that any real convex body has an optimal admissible mesh. In this work we build such optimal admissible meshes on two relevant classes of compact sets.

The paper is organized as follows.

In \textbf{Section 2} we work on star-shaped compact sets in $\R^d$ with nearly minimal boundary regularity assumptions. We prove in Theorem 2.3 that if $\Omega\subset \R^d$ is a bounded star-shaped Lipschitz domain such that $\complement\Omega$ has positive reach (see Definition \ref{positivereach}), then $K := \overline\Omega$ has an optimal admissible mesh.

In \textbf{Section 3} we address the same problem but we drop the star-shape
assumption on $K$, it turns out that a little stronger boundary regularity is
needed. In Theorem 3.6 we prove that if $\Omega$ is a bounded $\mathscr C^{1,1}$ domain of $\R^d$, then there exists an optimal admissible mesh for $K := \overline\Omega$.

In the \textbf{Appendices} we provide, for the reader’s convenience, a quick review of some definitions and results from non-smooth and geometric analysis and geometric measure theory that are involved in the framework of this paper.


\section{Optimal AMs for star-shaped sets having complement with positive reach}
In approximation theory it is customary to consider as mesh parameter the \textbf{fill distance} $h(Y)$ of a given finite set of points $Y$ with respect to a compact subset $X$ of $\R^d$.
\begin{equation}
h(Y):=\sup_{x\in X}\inf_{y\in Y}|x-y|.\label{clfilldistance}
\end{equation}
In this definition it is not important whether the segment $[x,y]$ lies in $X$ or not. If one wants to control the minimum length of paths joining $x$ to $y$ and supported in $X$ then one may consider the following straightforward extension of the concept of fill distance given above.
\begin{definition}[Geodesic Fill-Distance]
Let $Y$ be a finite subset of the set $X\subset \R^d$, then we set
\[
\mathscr A_{x,y}(X):=\left\{\gamma\in \mathscr C([0,1],X):\gamma(0)=x,\gamma(1)=y,\Var[\gamma]<\infty\right\}\]
and define
\begin{equation}
h_X(Y):=\sup_{x\in X}\inf_{y\in Y}\inf_{\gamma\in \mathscr A_{x,y}} \Var[\gamma],
\label{geofill}
\end{equation}
the geodesic fill distance of $Y$ over $X$.
\end{definition}
Here and throughout the paper we denote by $\Var[\gamma]$ the total variation of the curve $\gamma$, $$\Var[\gamma]:=\sup_{N\in \N}\,\sup_{0=t_0<t_1\dots<t_N=1}\sum_{i=1}^N|\gamma(t_i)-\gamma(t_{i-1})|.$$
Notice that if we make the further assumption of the local completeness of $X,$ then it ensures the existence of a length minimizer in $\mathscr A_{x,y}(X)$ provided it is not empty, that is if there exists a rectifiable curve $\psi$ connecting any $x$ and $y$ in $X$ such that $ \Var[\psi]\leq L<\infty$. Thus if $X$ has finite geodesic diameter, which will be the case of all instances considered in this paper, then we can replace $\inf_{\gamma\in \mathscr A_{x,y}} \Var[\gamma]$ by $\min_{\gamma\in \mathscr A_{x,y}} \Var[\gamma]$ in \eqref{geofill}.

Now we want to build a mesh on the boundary of a bounded Lipschitz domain having a given geodesic fill distance but keeping as small as possible the cardinality of the mesh. Then we use such a ``geodesic" mesh to build an optimal AM for the closure of the domain.

For the reader's convenience we recall here that a domain $\Omega\subset \R^d$ is termed a (uniformly) \emph{Lipschitz domain} if there exist $0<L<\infty,$ $r>0$ and an open neighbourhood $B$ of $0$ in $\R^{d-1}$ such that for any $x\in \partial \Omega$ there exists $\phi_x:B\rightarrow ]-r,r[$ and a rotation $R_x\in SO_d$ such that $\phi_x(0)=0,$ $\Lip(\phi_x)\leq L$ and 
\small
$$R_x^{-1}\left(\Omega \cap (x+R_x(B\times ]-r,r[))-x\right)=\epi \phi_x:=\{(\xi,t):\xi\in B,t\in ]-R,\phi_x(t)[\}.$$   
\normalsize
The following result, despite its rather easy proof, is a key element in our construction. For a bounded Lipschitz domain the euclidean and geodesic (on the boundary) distances restricted to the boundary are equivalent. 

\begin{proposition}
Let $\Omega$ be a bounded Lipschitz domain in $\R^d$, then there exists $\bar h>0$ such that there exists $Y_h\subset X := \partial\Omega,0<h<\overline h$ and the following hold:
\begin{enumerate}[(i)]
 \item $\Card Y_h =O \left(h^{1-d}\right)$ as $h\rightarrow 0$.
 \item $h_{X}(Y_h)\leq h$.
\end{enumerate}
\label{boundarymesh}
\end{proposition}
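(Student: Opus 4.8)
The plan is to build $Y_h$ chart by chart, using nothing more than the defining property of a bounded Lipschitz domain: $X=\partial\Omega$ is compact and carries a finite atlas $U_1,\dots,U_m$ of open subsets of $\R^d$ in which, after a $j$-dependent rotation of coordinates $x=(\xi,t)\in\R^{d-1}\times\R$, the set $X\cap U_j$ is the graph $\{(\xi,\phi_j(\xi)):\xi\in V_j\}$ of an $L$-Lipschitz function $\phi_j$ over a bounded open $V_j\subset\R^{d-1}$, with $L$ a common Lipschitz constant. First I would pass, by a Lebesgue-number argument on the compact set $X$, to a slightly shrunk cover $U_1',\dots,U_m'$ of $X$ (with parameter sets $V_j'\Subset V_j$) and to a number $\rho'>0$ such that every $x\in X$ lies in some $U_j'$ and $V_j'$ has distance at least $\rho'$ from $\partial V_j$. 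I expect this uniformization over the charts --- guaranteeing that the nearest grid node to each $x\in X$ can be found \emph{inside the same chart} and that the segment joining their parameters never leaves $V_j$ --- to be the only genuinely delicate point; everything after it is an elementary lattice count and the arclength bound for a Lipschitz graph.

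Next, for $0<h\le\bar h$ (with $\bar h$ fixed below) I would set $\delta:=\bigl(2\sqrt{(d-1)(1+L^2)}\bigr)^{-1}$ and, in each chart $j$, intersect the cubic lattice $(\delta h)\,\mathbb{Z}^{d-1}$ with $V_j'$ and lift the retained nodes through $\phi_j$ to points of $X$; then let $Y_h$ be the union over $j=1,\dots,m$ of these finitely many lifted points. Since each $V_j$ is bounded, the number of lattice nodes retained in chart $j$ is $\mathcal O\bigl((\delta h)^{1-d}\bigr)=\mathcal O(h^{1-d})$, and the number $m$ of charts does not depend on $h$, so $\Card Y_h=\mathcal O(h^{1-d})$, which is item \emph{(i)}.

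For \emph{(ii)}, fix $x\in X$, choose $j$ with $x\in U_j'$, and write $x=(\xi_x,\phi_j(\xi_x))$ with $\xi_x\in V_j'$. Taking $\bar h$ small enough that $\sqrt{d-1}\,\delta\bar h<\rho'$, there is a lattice node $\xi_y$ with $|\xi_x-\xi_y|_\infty\le\delta h/2$; the margin then forces $\xi_y\in V_j$ and the whole segment $[\xi_x,\xi_y]$ to stay in $V_j$, so $y:=(\xi_y,\phi_j(\xi_y))\in Y_h$. The lifted segment $\gamma(s):=\bigl((1-s)\xi_x+s\xi_y,\ \phi_j((1-s)\xi_x+s\xi_y)\bigr)$ is supported in $X\cap U_j\subset X$, joins $x$ to $y$, and, since $\phi_j$ is $L$-Lipschitz,
\[
\Var[\gamma]\le\sqrt{1+L^2}\,|\xi_x-\xi_y|\le\sqrt{1+L^2}\,\sqrt{d-1}\,\tfrac{\delta h}{2}=\tfrac h4\le h .
\]
Hence $\inf_{\gamma\in\mathscr A_{x,y}}\Var[\gamma]\le h$ for this $y\in Y_h$, and taking the supremum over $x\in X$ yields $h_X(Y_h)\le h$, which is item \emph{(ii)}.
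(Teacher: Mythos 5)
Your proof is correct and follows essentially the same route as the paper's: cover $\partial\Omega$ by finitely many rotated Lipschitz graphs, lay down a $(d-1)$-dimensional grid of step proportional to $h/\sqrt{1+L^2}$ in each parameter domain, lift through the graph maps, count lattice points for \emph{(i)}, and bound the length of the lifted parameter segment by $\sqrt{1+L^2}$ times its Euclidean length for \emph{(ii)}. The only (harmless) slip is that you intersect the lattice with $V_j'$ when building $Y_h$ but later invoke a node only guaranteed to lie in $V_j$; intersecting with $V_j$ instead fixes this, and otherwise your handling of the chart margins and the $\sqrt{d-1}$ factor is, if anything, more careful than the paper's.
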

\begin{figure}
\centering
\includegraphics[width=5in]{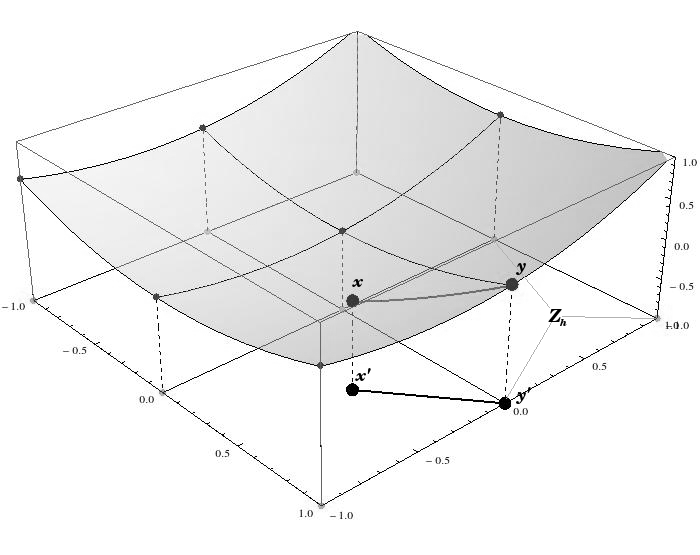}
\caption{The \emph{geodesic mesh} in the proof of Proposition \ref{boundarymesh} is built by lifting the grid mesh $Z_h$ by the local parametrization of the boundary $X$. The curve $\gamma_x$ connecting $x$ to $y$ is similarly produced by lifting the segment $[x',y'].$}
\label{costruzione}
\end{figure}

\begin{proof}
Here we denote by $B_\infty^s(x_0,r)$ the $s$ dimensional ball of radius $r$ centered at $x_0$ with respect to the norm $|x|_\infty:=\max_{i\in\{1,2,\dots,s\}}|x_i|,$ i.e. the coordinate cube centered at $x_0$ and having sides of length $2r.$

Since $\Omega$ is a Lipschitz domain  using the above notation we can write

$$\left(\,x+R_x B^d_\infty(0,r)\, \right)\cap \partial\Omega= R_x\graph(\phi_x).$$

Let us denote the \emph{graph} function of $\phi_x$ by $g_x:B^{d-1}_\infty(0,r)\longrightarrow\R^d,$ that is $B^{d-1}_\infty(0,r)\ni \boldsymbol\xi\mapsto\{\xi_1,\xi_2,\dots,\xi_{d-1},\phi_x(\boldsymbol\xi)\}=g_x(\boldsymbol\xi).$

By compactness we can pick $x_1,x_2,\dots,x_{M(r)}\in\partial \Omega$ such that
$$\partial \Omega\subseteq\cup_{i=1}^{M(r)}X_i=: \cup_{i=1}^{M(r)}\left(\,\left(x_i+R_{x_i} B^d_\infty(0,r)\right)\cap\partial\Omega \,\right).$$
Let $\bar h:=r\sqrt{1+L^2}$, take any $0<h\leq \bar h$ and let us consider the grid of step-size $\frac{h}{\sqrt{(1+L^2)}}$  in the $d-1$ dimensional cube
$$Z_h:=\left(\left\lbrace-r+\frac{j h}{\sqrt{(1+L^2)}} \right\rbrace_{j=0,1,\dots, \lceil\frac{2r\sqrt{1+L^2}}{h}\rceil}\right)^{d-1}\subset B^{d-1}_\infty(0,r) ,$$
where $\lceil\cdot\rceil$ is the ceil operator.
Set
\begin{eqnarray*}
Y_h^i &:=&x_i+R_{x_i}\left(g_{x_i}(Z_h)\right),\\
Y_h&:=&\cup_{i=1}^{M(r)}Y_h^i.
\end{eqnarray*}
Now notice that
\begin{eqnarray*}
\Card{Y_h}&\leq& \sum_{i=1}^{M(r)}\Card Y_h^i=M(r)\Card Z_h\\
&=&M(r)\left(1+\left\lceil\frac{2r\sqrt{1+L^2}}{h}\right\rceil\right)^{d-1}\\
&=&O(h^{1-d}).
\end{eqnarray*}
  
In order to verify the \emph{(ii)} for any $x\in \partial \Omega$ we explicitly find $y\in Y_h$ and build a curve $\gamma_x$ connecting $x$ to $y$ whose variation gives an upper bound for the geodesic distance of $x$ from $Y_h$. For the following construction we refer to the Figure \ref{costruzione}.

Take any $x\in \partial \Omega,$ then there exist (at least one) $i\in \{1,2,\dots,M(r)\}$ such that $x\in X_i.$ Let us pick such an $i.$

Let us denote by $\pr_i$ the canonical projection on the first $d-1$ coordinates acting from $R_{x_i}^{-1}\,\left(\left(x_i+R_{x_i} B^d_\infty(0,r)\right)\cap \partial\Omega\;-\;x_i\;\right)  $ onto $B^{d-1}_\infty(0,r).$

Let $x':=\pr_i(x)$, by the very construction we can find $y'\in Z_h$ such that $|x'-y'|\leq\frac{h}{\sqrt{1+L^2}}=:h',$ moreover the whole segment $[x',y']$ lies in $B_\infty^{d-1}(0,r).$

We consider the curve  $\alpha_x:\xi\mapsto x'+\xi \frac{y'-x'}{|y'-x'|},\xi\in[0,h']$ and we set $\gamma_x(\xi):=x_i+g_{x_i}(\alpha(\xi))$ the curve that joins x to $y:=x_i+g_{x_i}(y')\in Y_h$ obtained by mapping the segment $[x',y']$ under $g_{x_i}.$

Now we use \emph{Area Formula} \cite{Fed69}  \cite{Eva}[Th. 1 pg. 96] to compute the length of the Lipschitz curve $\gamma_x.$
\footnotesize
\begin{eqnarray}
 & &\Var[\gamma_x]=\int_0^{h'}\Jac[\gamma](t)dt=\label{area}\\
&=&\int_0^{h'}\left[ \sum_{i=1}^{d-1} \left(\frac{y'_i-x'_i}{|y'-x'|}\right)^2\;+\; \dots +\left(\nabla\phi_x\left(x'+t \frac{y'_i-x'_i}{|y'-x'|}\right)\cdot(t \frac{y'_i-x'_i}{|y'-x'|})\right)^2  \right]^\frac 1 2dt\nonumber\\
&=&\int_0^{h'}\left[\left| \frac{y'-x'}{|y'-x'|}\right|^2+ L \left| \frac{y'-x'}{|y'-x'|}\right|^2\right]^\frac 1 2 dt\leq \sqrt{1+L^2}h'= h\nonumber.
\end{eqnarray}
\normalsize
Here $\Jac$ is the Jacobian of a Lipschitz mapping, see \cite{Eva}[pg. 101].

We take the maximum over $x\in \partial \Omega$ using \eqref{geofill}, notice that our $\gamma_x$ by the construction is an element of $\mathscr A_{x,y},$
\begin{equation*}
 h_{\partial\Omega}(Y_h)=\sup_{x\in X}\inf_{y\in Y_h}\inf_{\eta\in \mathscr A_{x,y}} \Var[\eta]\leq\sup_{x\in X}\Var[\gamma_x]\leq h.
\end{equation*}
\Square\end{proof}

Now we are ready to state and prove the main result of this section. We build an optimal mesh for a star shaped Lipschitz bounded domain having complement of positive reach by the following technique. First, we consider the hypersurfaces given by the images of the boundary of the domain under a one parameter family of homotheties, being the parameter chosen as Chebyshev points scaled to the suitable interval. We prove that this family of hypersurfaces is a norming set for the given compact. The second key element is that on each such hypersurface we can use a Markov Tangential Inequality with minimal (with respect to the degree of the considered polynomial) growth rate $n$.   

\begin{theorem}
Let $\Omega\subset \R^d$ be a bounded star-shaped Lipschitz domain such that $\complement\Omega$ has positive reach (see Definition \ref{positivereach}),
then $K:=\overline{\Omega}$ has an optimal polynomial admissible mesh.
\label{mainresult}\end{theorem}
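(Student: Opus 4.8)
The plan is to treat the deep interior of $K$ via its radial (star-shaped) structure and a fixed collar of $\partial\Omega$ via a ``tubular'' description coming from the distance function. After a translation, assume $\Omega$ is star-shaped with respect to $0$ and $B(0,\delta)\subseteq\Omega\subseteq B(0,R)$. Put $\rho_0:=\Reach(\complement\Omega)>0$ and fix $\sigma<\rho_0$. From Federer's theory I would extract the facts needed: $\Omega$ satisfies a uniform interior ball condition of radius $\rho_0$; the function $x\mapsto d(x,\partial\Omega)$ is $\mathscr C^{1,1}$ on the collar $U_\sigma:=\{x\in\Omega:d(x,\partial\Omega)<\sigma\}$, so the inward unit normal $\nu=\nabla\,d(\cdot,\partial\Omega)$ is Lipschitz on $\partial\Omega$; and the normal flow $\Psi(\beta,t):=\beta-t\,\nu(\beta)$ is a bi-Lipschitz homeomorphism of $\partial\Omega\times[0,\sigma)$ onto $U_\sigma$ with $d(\Psi(\beta,t),\partial\Omega)=t$ and $\beta$ the foot-point of $\Psi(\beta,t)$. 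The structural point that drives everything is that $\Psi$ is \emph{affine in $t$}.

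Next I would record a distance-weighted (sharp) Bernstein inequality: for $x\in U_\sigma$ with foot-point $\beta$ and $t=d(x,\partial\Omega)$, the interior ball at $\beta$ produces a chord of $K$ through $x$, orthogonal to $\nu(\beta)$, of half-length $\gtrsim\sqrt{\rho_0 t}$ with $x$ as its midpoint, while the inward normal chord of $K$ at $x$ has length $\gtrsim\rho_0$ with $x$ at distance $t$ from an end; the univariate Bernstein inequality on these chords yields
\[
|\nabla p(x)|\ \le\ \frac{C\,n}{\sqrt{d(x,\partial\Omega)}}\,\|p\|_K \qquad (p\in\mathscr P^n(\R^d),\ x\in U_\sigma).
\]
Then I would split $K=U_\sigma\cup K_\sigma$ with $K_\sigma:=\{d(\cdot,\partial\Omega)\ge\sigma\}$. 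The interior-ball estimate shows $\mu K\Subset\Omega$ for every $\mu<1$, so $K_\sigma$ is a compact subset of the open set $\Omega$; covering it by finitely many closed balls contained in $\Omega$ and using that a Euclidean ball carries an optimal admissible mesh ($\mathcal O(n^d)$ points), the union of such meshes controls $\|p\|_{K_\sigma}$ with $\mathcal O(n^d)$ points.

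On the collar I would build the mesh as a ``product''. Fix a one-dimensional admissible mesh $T_n=\{t_\ell\}_{\ell=0}^{\lceil c_0 n\rceil}$ of $[0,\sigma]$ of Chebyshev type (constant $C_1$, smallest positive node $\asymp\sigma n^{-2}$, clustering at both endpoints). Since $t\mapsto\Psi(\beta,t)$ is affine, $t\mapsto p(\Psi(\beta,t))$ is a univariate polynomial of degree $\le n$, so $\sup_{0\le t<\sigma}|p(\Psi(\beta,t))|\le C_1\max_\ell|p(\Psi(\beta,t_\ell))|$. Hence it is enough, for each node $t_\ell>0$, to produce a mesh $M_\ell$ of the parallel hypersurface $S_\ell:=\{d(\cdot,\partial\Omega)=t_\ell\}$ with $\|p\|_{S_\ell}\le C_2\|p\|_{M_\ell}$ and $\Card M_\ell=\mathcal O(n^{d-1})$, both uniformly in $\ell$: then $A_n:=\bigcup_\ell M_\ell$ together with the core mesh is admissible for $K$ with $\Card A_n=\mathcal O(n)\cdot\mathcal O(n^{d-1})+\mathcal O(n^d)=\mathcal O(n^d)$, which is optimal.

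I expect the construction of the $M_\ell$ to be the main obstacle. For $t_\ell>0$ the positive reach hypothesis makes $S_\ell$ a compact $\mathscr C^{1,1}$ hypersurface — whereas $S_0=\partial\Omega$ need not be, since inward corners are allowed, which is exactly why positive reach, rather than smoothness of $\partial\Omega$, is the right assumption. One transports the boundary meshes of Proposition \ref{boundarymesh} by the bi-Lipschitz maps $\Psi(\cdot,t_\ell)$ and uses the sharp Bernstein inequality at depth $t_\ell$ to fix the admissible tangential resolution, treating the high-curvature pieces of $S_\ell$ near the corners of $\partial\Omega$ (of size $\asymp t_\ell$) by the scale-invariant optimal meshes of spheres. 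The delicate point is keeping $\Card M_\ell=\mathcal O(n^{d-1})$ uniformly in $\ell$: a geodesic grid at the Bernstein resolution at depth $t_\ell$ costs $\mathcal O((n/\sqrt{t_\ell})^{d-1})$, and summed over the $\asymp n$ nodes this overshoots $n^d$ by a logarithmic (if $d=2$) or polynomial (if $d\ge3$) factor; avoiding the loss forces one to replace those grids by genuine $(d-1)$-dimensional \emph{optimal} meshes of the $S_\ell$, i.e.\ to run the argument recursively in the dimension, and that recursion is where the bulk of the work sits.
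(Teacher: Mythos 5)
There is a genuine gap, and you name it yourself: the construction of the level-set meshes $M_\ell$ with $\Card M_\ell=\mathcal O(n^{d-1})$ \emph{uniformly in} $\ell$ is the heart of the proof, and your proposal ends by observing that your own strategy overshoots the cardinality bound and by deferring the fix to an unexecuted ``recursion in the dimension.'' The reason you get stuck is that you calibrate the tangential resolution of $M_\ell$ with the isotropic Bernstein bound $|\nabla p(x)|\lesssim n\,d(x,\partial\Omega)^{-1/2}\|p\|_K$, which degenerates as $t_\ell\to 0$. The tool you are missing is a \emph{tangential} Markov inequality of exponent $1$ with a constant independent of the level: since $\Reach(\complement\Omega)\geq\rho_0$, Federer's characterization gives, at every point $x$ of every level set $S_\ell$ (including the spherical caps that $S_\ell$ develops over inward corners of $\partial\Omega$), a ball of \emph{fixed} radius $\geq\rho_0-\sigma$ contained in $\Omega$ and tangent to $S_\ell$ at $x$; the degree-$1$ tangential Markov inequality on that ball (cf.\ \eqref{MarkovBall}) yields $|\partial_v p(x)|\leq \frac{n}{\rho_0-\sigma}\|p\|_K$ for tangential $v$, uniformly in $\ell$. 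With that, a geodesic mesh of fill distance $\asymp(\rho_0-\sigma)/n$ on each $S_\ell$ (Proposition \ref{boundarymesh} transported by a Lipschitz map) costs $\mathcal O(n^{d-1})$ points per level and $\mathcal O(n^d)$ in total — no recursion is needed. A secondary but real error: your claim that $\Psi(\beta,t)=\beta-t\nu(\beta)$ is a bi-Lipschitz homeomorphism of $\partial\Omega\times[0,\sigma)$ onto the collar is false precisely in the regime the theorem targets, because at an inward corner $\nu$ is not single-valued and the normal segments issued from $\partial\Omega$ miss the fan of points projecting onto the corner; you must foliate the collar by the segments $[\pi_{\complement\Omega}(x),x]$ rather than by a normal field on $\partial\Omega$.

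Your route is also genuinely different from the paper's, which is worth noting because the difference explains where the star-shaped hypothesis is used. You foliate a collar by parallel hypersurfaces of the distance function and use a Chebyshev mesh in the normal parameter; this is essentially the strategy the paper reserves for Section 3 ($\mathscr C^{1,1}$ domains), and if completed it would not need star-shapedness at all. The paper's Section 2 proof instead foliates all of $K$ by the radial dilates $b_n^i\,\partial\Omega$ of the boundary through the star center, with $2n+1$ Chebyshev points along each ray providing the one-dimensional norming inequality \eqref{intervalmesh}; the tangential Markov inequality on $\partial\Omega$ (via Proposition \ref{reachball}) is then transferred to each dilate for free by composing $p$ with the homothety, so a single boundary mesh $Y_n$ of $\mathcal O(n^{d-1})$ points is simply rescaled $2n+1$ times. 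Star-shapedness is exactly what lets the paper avoid the parallel-surface geometry (caps at corners, level-dependent curvature) that your proposal runs into.
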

\begin{proof}
We can suppose without loss of generality the center of the star to be $0$ by stability of AM under euclidean isometries \cite{BDSV10}.

Let us set $b_n^i(r):=\frac{r}{2}(1+\cos{\frac{\pi(2n-i)}{2n}})$ for any $r>0$ $i=1,2,\dots 2n+1$. 
By a well known result (\cite{EZ64}) the set $G_n(r)$ of all $b_n^i(r)$'s (varying the index $i$) is an admissible mesh of degree $n$ and constant $\sqrt 2$ for the interval $[0,r]$:
\begin{equation} 
\|p\|_{[0,r]}\leq \sqrt{2} \|p\|_{G_n(r)}\;\;\forall p\in \mathscr P^n.
\label{intervalmesh}
\end{equation}

Let us take any $x\in X:=\partial K$ and consider the set $\tilde G_n(x):=xG_n(1)$, notice that $\tilde G_n(x)\subset K$ because $K$ is star-shaped.

One can set $Z_n:=\cup_{x\in X} \tilde G_n(x)$ , i.e., $Z_n$ is the union of the images of $X$ under the homotheties having parameters $\cos{\frac{\pi(2n-i)}{2n}}.$ See Figure \ref{butterfly}.

\begin{figure}
\centering
\includegraphics[width=2in]{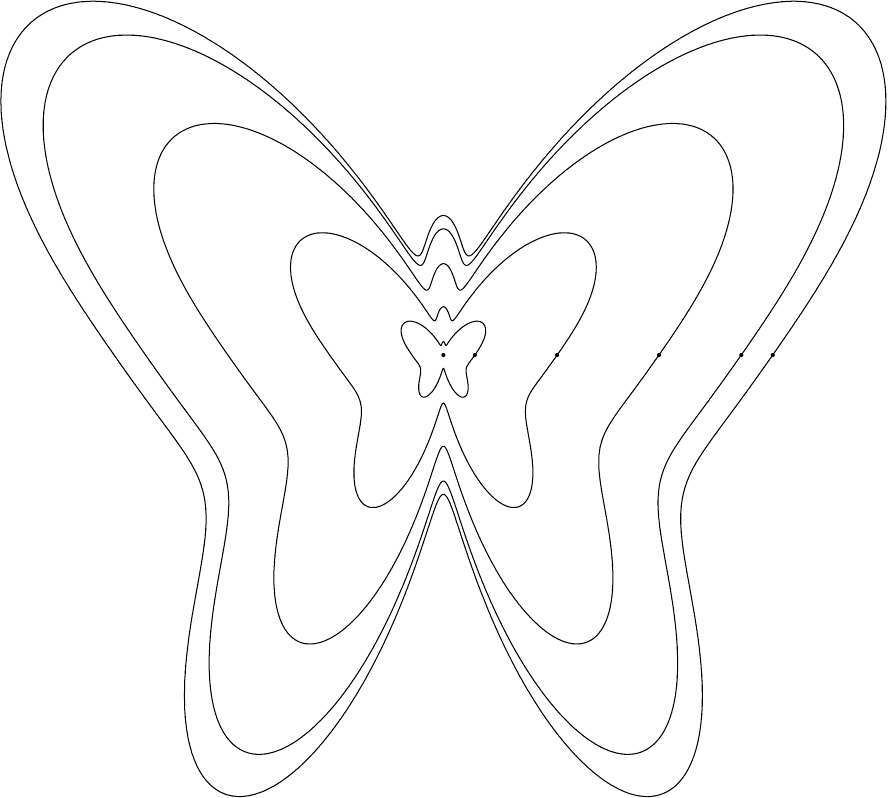}
\caption{The geometry of $Z_n$.}
\label{butterfly}
\end{figure}

Notice that the restriction of any polynomial of degree at most $n$ in $d$ variables to any segment is a univariate polynomial of degree at most $n$, then due to \eqref{intervalmesh} $Z_n$ are norming sets for $K$, that is
\begin{equation}
\|p\|_K\leq \sqrt{2} \|p\|_{Z_n}\;\;\forall p\in \mathscr P^n(\R^d)\label{norming}.
\end{equation}
Therefore we are reduced to finding an admissible polynomial mesh of degree $n$ for $Z_n$.

Let us consider any\footnote{Notice that $X$ is compact connected, nonempty and consists of an infinite number of points, obviously it contains an infinite number of Lipschitz curves.} Lipschitz curve $\gamma:[0,1]\rightarrow X$, by Proposition \ref{reachball}  for a.e. $s\in]0,1[$ there exists $v\in\mathbb S^{d}$ such that
\begin{enumerate}
 \item $B(\gamma(s)+rv,r)\subseteq K$ and
\item $ \gamma'(s)\in \mathcal T_{\gamma(s)} \partial B(\gamma(s)+rv,r).$ 
\end{enumerate}
Hereafter $\mathcal T_p M$ is, as customary, the tangent space to $M$ at $p\in M.$

Since the boundary of the ball is a compact algebraic manifold, it admits \emph{Markov Tangential Inequality} of degree $1$  (see \cite{BLMT} and the references therein), moreover the constant of such an inequality is the inverse of the radius of the ball:
\begin{equation}
\left|\frac{\partial p}{\partial v}(x)\right|\leq \frac{|v|}{r} n \|p\|_{B(x_0,r)}  \;\;\forall p\in \mathscr P^n(\R^d)\;  ,\,\forall v\in \mathcal T_x {\partial B(x_0,r)}.\label{MarkovBall} 
\end{equation}

Let us recall (see for instance \cite{AGS08}[Lemma 1.1.4]) that any Lipschitz curve $\gamma$ can be re-parametrized by arclength by the inversion of $t\mapsto \Var[\gamma|_{[0,t]}]$, obtaining a Lipschitz curve
\begin{eqnarray*}
\tilde\gamma:[0,\Var[\gamma]]&\rightarrow & X\\
\Var[\tilde\gamma]&=& \Var[\gamma]\\
\Lip[\tilde\gamma]&=& 1 =_{\text{a.e.}}|\tilde\gamma'|
\end{eqnarray*}

Therefore (using Rademacher Theorem, see for instance \cite{Eva}[Th.2 pg 81]) for a.e. $s\in ]0,1[$ we have
\begin{eqnarray}
\left| \frac{\partial(p\circ\tilde\gamma)}{\partial t}(t)\right|&=& |\nabla p(\tilde\gamma(t))\cdot\tilde\gamma'(t)|\\
&\leq& \frac{|\tilde\gamma'(t)|n}{r}\|p\|_{B(\tilde\gamma(t)+r v,r)}\leq\frac{n}{r}\|p\|_{K}.\label{estimate}
\end{eqnarray}

By Proposition \ref{boundarymesh} we can pick subsets $Y_{\frac{r}{2n}}$ on $X$ such that $h_X\left(Y_{\frac{r}{2n}}\right)\leq \frac{r}{2n}$ and $\Card Y_{\frac{r}{2n}}=O(n^{d-1})$. For notational convenience we write $Y_n$ in place of $Y_{\frac{r}{2n}}.$

Let us now pick any $x\in X$ and consider $\gamma$, an arc connecting a closest point $y_n^i$ of $Y_n$ to $x$ and $x$ itself such that $\Var[\gamma]\leq \frac{r}{2n}$, parametrized in the arclength.

By the Lebesgue Fundamental Theorem of Calculus for any $p\in \mathscr P^n(\R^d)$ one has
\begin{eqnarray*}
|p(x)|&\leq& |p(y_n^i)|+\left|\int_0^{\Var[\gamma]} \frac{\partial (p\circ \gamma)}{\partial \xi}(\xi) d\xi\right|\\
&\leq&|p(y_n^i)|+\int_0^{\Var[\gamma]}\left| \nabla p(\gamma(\xi))\cdot\gamma'(\xi) \right|d\xi\\
&\leq&|p(y_n^i)|+\int_0^{r/2n}\frac{n}{r}\|p\|_Kd\xi\leq |p(y_n^i)|+ \frac{1}{2}\|p\|_K
\end{eqnarray*}
where in the last line we used \eqref{estimate}. Thus we have
\begin{equation}
\|p\|_X\leq \|p\|_{Y_n}+\frac{1}{2} \|p\|_K .
\label{estimate2}
\end{equation}
By the properties of rescaling, setting $b_n^i:=b_n^i(1)=\frac{1+\cos{(i\pi/n)}}{2},$ we have also
\begin{equation*}
\|p\|_{b_n^i X}\leq \|p\|_{b_n^i Y_n}+1/2 \|p\|_{b_n^i K}\leq \|p\|_{b_n^i Y_n}+\frac{1}{2}\|p\|_K,
\end{equation*}
for, consider the homothety $\Theta_n^i:\R^d\rightarrow \R^d$, where $\Theta_n^i(x):=\frac x {b_n^i}$ and write the inequality \eqref{estimate2} for each $q_{i,n}:=p\circ\Theta_n^i.$

Therefore, taking the union over $i=0,1,2n$ and using $x\tilde G_n=\cup_{i=0}^{m_n} b_n^ix$ and $Z_n=\cup_{x\in X}x\tilde G_n$, we have
\begin{equation*}
 \|p\|_{Z_n}=\|p\|_{\cup_{x\in X}(\cup_i b_n^ix)}\leq\|p\|_{\cup_i b_n^i Y_n}+\frac{1}{2}\|p\|_K.
\end{equation*}

Hence, setting $X_n:=\cup_{i=0}^{2n}b_n^iY_n,$ we can write
\begin{equation*}
\|p\|_{Z_n}\leq \|p\|_{X_n} +\frac{1}{2} \|p\|_K.
\end{equation*}
Now we can use \eqref{norming} to get $\|p\|_K\leq \sqrt 2\left( \|p\|_{X_n} +\frac{1}{2} \|p\|_K\right)$ and hence
\begin{equation*}
\|p\|_K\leq \frac{2\sqrt 2}{2-\sqrt{2}}\|p\|_{X_n}=2(\sqrt{2}+1)\|p\|_{X_n}.
\end{equation*}
Thus $X_n$ is an admissible polynomial mesh for $K$.
The set $X_n$ is the disjoint union of $2n+1$ sets $b_n^i Y_n$,thus $$\Card X_n=(2n+1)O(n^{d-1})=O(n^d),$$
therefore $X_n$ is an optimal admissible mesh of constant $2(\sqrt{2}+1)$.
\Square\end{proof}

This result should be compared to the recent article \cite[Theorem 3]{KR13}. The results achieved by the author are set in a  little more general context, still they do not cover the case of a Lipschitz domain with complement having Positive Reach but not being $\mathscr C^{1,1-2/d}\,, d\geq 2$ globally smooth. The key element here is that inward pointing corners and cusps are allowed in our setting, while they are not in \cite{KR13}.

From an algorithmic point of view an AM built by a straightforward application of Theorem \ref{mainresult} may be refined. Informally speaking such a collocation technique creates AMs that are clustered near the center of the star, while this seem to have no geometrical nor analytical meaning.

This issue can be partially removed by some minor modifications of the construction which turn the proof of Theorem \ref{mainresult} in a more efficient algorithm. 

Theorem \ref{mainresult} is formulated in a rather general way, here we provide two corollaries that specialize such result.

It has been shown (see \cite{ABMM11}) that $\mathscr C^{1,1}$ domains (see \ref{c11domain}) of $\R^d$ are characterized by the so called \emph{uniform double sided ball condition}, that is, $\Omega$ is a $\mathscr C^{1,1}$ domain iff there exists $r>0$ such that for any $x\in \partial \Omega$ there exist $v\in \mathbb S^{d-1}$ such that we have $B(x+r v,r)\subseteq \Omega$ and $B(x-r v,r)\subseteq \complement\overline\Omega$, this property in particular says that $\complement \Omega$ (and $\Omega$ itself) has positive reach \ref{positivereach}. Therefore the following is a straightforward corollary of our main result.

\begin{cor}
Let $\Omega$ be a bounded star-shaped $\mathscr C^{1,1}$ domain, then its closure has an optimal AM.\label{C11case}
\end{cor}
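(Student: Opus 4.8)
The plan is to check that a bounded star-shaped $\mathscr C^{1,1}$ domain $\Omega$ meets every hypothesis of Theorem \ref{mainresult}, and then invoke that theorem verbatim. So this is really a matter of unpacking the regularity assumption into the two ingredients we need: Lipschitz boundary, and positive reach of the complement.

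First I would recall the characterization established in \cite{ABMM11}: $\Omega$ is a $\mathscr C^{1,1}$ domain if and only if it satisfies the \emph{uniform double-sided ball condition}, i.e.\ there is $r>0$ such that for every $x\in\partial\Omega$ one can pick $v\in\mathbb S^{d-1}$ with $B(x+rv,r)\subseteq\Omega$ and $B(x-rv,r)\subseteq\complement\overline\Omega$. From this single condition I would extract the two facts I need. On the one hand, a uniform two-sided ball condition forces $\partial\Omega$ to be, in suitable local orthonormal coordinates, the graph of a Lipschitz function whose Lipschitz constant is controlled purely by $r$ and $\ddim\Omega$; hence $\Omega$ is a bounded Lipschitz domain in the sense used in Section $2$. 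On the other hand, the exterior ball condition $B(x-rv,r)\subseteq\complement\overline\Omega$ holding uniformly over $x\in\partial\Omega$ is exactly the statement that $\complement\Omega$ has reach at least $r$ in the sense of Federer (Definition \ref{positivereach}): every point of $\complement\Omega$ within distance $r$ of $\partial(\complement\Omega)=\partial\Omega$ has a unique nearest boundary point, which is the defining property of positive reach, and this is precisely the content packaged in Proposition \ref{reachball}.

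Second, since $\Omega$ is assumed star-shaped in the statement, all the hypotheses of Theorem \ref{mainresult} — bounded, star-shaped, Lipschitz domain, with complement of positive reach — are in place. Therefore $K:=\overline\Omega$ admits an optimal polynomial admissible mesh, which is exactly the assertion. Note that the explicit constant $2(\sqrt2+1)$ and the cardinality bound $\mathcal O(n^d)$ produced in the proof of Theorem \ref{mainresult} transfer without change.

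The only point requiring any care — and it is bookkeeping rather than a genuine obstacle — is confirming that the uniform ball condition delivers \emph{both} Lipschitz regularity of $\partial\Omega$ \emph{and} positive reach of $\complement\Omega$ at once, with uniform constants; both implications are classical in non-smooth and geometric analysis, so I expect no real difficulty, and the corollary follows immediately once the hypotheses of Theorem \ref{mainresult} are seen to hold.
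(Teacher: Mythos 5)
Your proposal is correct and follows essentially the same route as the paper: both arguments reduce the corollary to Theorem \ref{mainresult} by invoking the uniform double-sided ball characterization of $\mathscr C^{1,1}$ domains from \cite{ABMM11}, together with the trivial observation that such a domain is Lipschitz. One small caution: a one-sided uniform exterior ball condition for $\complement\Omega$ is only the UIBC, which the paper explicitly notes is \emph{strictly weaker} than positive reach, so it is not "exactly the statement" that $\Reach(\complement\Omega)\geq r$; what actually delivers positive reach here is the $\mathscr C^{1,1}$ regularity itself (equivalently the two-sided condition), as recorded in Theorem \ref{regularirtyoforienteddistance} \emph{(i)}, and Proposition \ref{reachball} is a consequence of positive reach rather than its defining property.
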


It is worth recalling that such domains can also be characterized by the behavior of the oriented distance function of the boundary (i.e. $b_\Omega(x):=d(x,\Omega)-d(x,\complement\Omega)$). For any such $\mathscr C^{1,1}$ domain there exists a (double sided) tubular neighborhood of the boundary where the oriented distance function has the same regularity of the boundary, this condition characterizes $\mathscr C^{1,1}$ domains too.
This framework is widely studied in \cite{Del} and \cite{zodeldist}.

In the planar case a similar result holds under slightly weaker assumptions.

\begin{theorem}[\cite{PV13}]\label{mainresultcoro}
 Let $\Omega$ be a bounded star-shaped domain in $\R^2$ satisfying a \emph{Uniform Interior Ball Condition} UIBC (see Definition \ref{uniforminteriorball}),
then $K:=\overline{\Omega}$ has an optimal polynomial admissible mesh.
\end{theorem}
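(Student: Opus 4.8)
The plan is to rerun the mechanism of the proof of Theorem~\ref{mainresult}, using that in dimension two the uniform interior ball condition already supplies everything the argument needs, so that the extra hypotheses ``Lipschitz domain'' and ``$\complement\Omega$ of positive reach'' there become unnecessary. After normalizing the center of the star to $0$ (stability of AMs under euclidean isometries, \cite{BDSV10}) the scheme is: (1) build a geodesic mesh $Y_n$ on $X:=\partial\Omega$ of cardinality $\mathcal O(n)$ with $h_X(Y_n)\le\tfrac{r}{2n}$; (2) form the radial Chebyshev norming set $Z_n:=\bigcup_{x\in X}\{b_n^ix\}_i=\bigcup_{i=0}^{2n}b_n^iX\subseteq K$ (here $b_n^0,\dots,b_n^{2n}\in[0,1]$ are the rescaled Chebyshev abscissas and $K=\overline\Omega$ is star-shaped w.r.t.\ $0$), for which restriction to radial segments and the interval mesh \eqref{intervalmesh} give $\|p\|_K\le\sqrt2\,\|p\|_{Z_n}$ (i.e.\ \eqref{norming}); (3) a tangential Markov telescoping along $X$; (4) rescaling to the dilated copies $b_n^iX$ and assembling.

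Step (1) is where I would use the planar geometry, which makes things easy. The uniform interior ball condition of radius $r$ makes $X$ an $r$-regular Jordan curve, in particular of finite length, and its geodesics are simply the sub-arcs joining two points; placing on $X$ points equispaced in arc length at spacing $\tfrac{r}{2n}$ produces $Y_n$ with $\Card Y_n=\mathcal O(n)$ and $h_X(Y_n)\le\tfrac{r}{2n}$, with no Lipschitz-chart machinery needed (alternatively, once $X$ is known to be Lipschitz, this is Proposition~\ref{boundarymesh} with $d=2$). Step (3) then runs as in Theorem~\ref{mainresult} but bypassing Proposition~\ref{reachball}: along an arc-length parametrized sub-arc $\gamma$ of $X$, for every $s$ the hypothesis gives $v\in\mathbb S^1$ with $B(\gamma(s)+rv,r)\subseteq\Omega\subseteq K$, and since $\mathrm{int}\,B(\gamma(s)+rv,r)\subseteq\Omega$ misses $\partial\Omega$, the function $\sigma\mapsto|\gamma(\sigma)-(\gamma(s)+rv)|^2$ has a minimum at $\sigma=s$, so $\gamma'(s)\in\mathit T_{\gamma(s)}\partial B(\gamma(s)+rv,r)$ wherever $\gamma$ is differentiable; the degree-$1$ tangential Markov inequality \eqref{MarkovBall} on the ball (constant $1/r$) gives $|\partial(p\circ\gamma)/\partial s|\le(n/r)\|p\|_K$ a.e., and integration along the sub-arc of length $\le\tfrac{r}{2n}$ from $x\in X$ to a nearest $y_n^i\in Y_n$ yields $\|p\|_X\le\|p\|_{Y_n}+\tfrac12\|p\|_K$ as in \eqref{estimate2}. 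Step (4) is verbatim: writing this for $p\circ\Theta_n^i$, $\Theta_n^i(x):=x/b_n^i$, and using $b_n^iK\subseteq K$, then unioning over $i$ with $X_n:=\bigcup_{i=0}^{2n}b_n^iY_n\subseteq Z_n$, gives $\|p\|_{Z_n}\le\|p\|_{X_n}+\tfrac12\|p\|_K$; combined with the norming inequality \eqref{norming} this forces $\|p\|_K\le 2(\sqrt2+1)\|p\|_{X_n}$, while $\Card X_n\le(2n+1)\Card Y_n=\mathcal O(n^2)$, so $X_n$ is an optimal admissible mesh of constant $2(\sqrt2+1)$.

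The main obstacle — the one step that is not a mechanical transcription of the proof of Theorem~\ref{mainresult} — is (1): proving that in $\R^2$ ``bounded $+$ star-shaped $+$ uniform interior ball condition'' forces the boundary to be a rectifiable Jordan curve (so that the $\mathcal O(n)$-point geodesic mesh exists) and legitimizes the a.e.\ tangency used in (3). This is exactly the reason the planar case needs weaker hypotheses than Theorem~\ref{mainresult}: in $\R^2$ the boundary is a curve, its geodesics are sub-arcs, and $r$-regularity directly yields finite length, so neither a Lipschitz-chart construction nor the positive-reach assumption on $\complement\Omega$ is required — whereas for $d\ge3$ the corresponding step is precisely what the extra hypotheses of Theorems~\ref{mainresult} and \ref{mainresult2} are there to provide.
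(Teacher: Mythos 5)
Your proposal is correct and follows the same strategy the paper itself indicates for this result; note that the paper does not actually prove Theorem \ref{mainresultcoro} here (it is quoted from \cite{PV13}), and only the discussion following the statement explains which hypotheses of Theorem \ref{mainresult} are dropped and why. Your steps (2) and (4) are the proof of Theorem \ref{mainresult} essentially verbatim, and your step (1) matches the paper's remark that in the plane the Lipschitz-chart construction of Proposition \ref{boundarymesh} is replaced by points equispaced in arc length on the rectifiable boundary. Where you genuinely diverge is step (3): instead of Proposition \ref{reachball} (which requires positive reach) or the measure-theoretic input the paper invokes from \cite{MKV12} (in $d=2$ the set of boundary points with higher-dimensional normal space is finite, and off it the single-valued normal is Lipschitz), you obtain the a.e.\ tangency $\gamma'(s)\perp v$ directly from first-order minimality of $\sigma\mapsto|\gamma(\sigma)-(\gamma(s)+rv)|^2$ at $\sigma=s$. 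That argument is valid and strictly more elementary: it needs only a.e.\ differentiability of the arc-length parametrization plus the interior ball, and nothing about the structure of the normal bundle. The one soft spot is the claim that a bounded star-shaped planar domain with the Uniform Interior Ball Condition has a rectifiable Jordan-curve boundary: you rightly isolate this as the crux, but ``$r$-regularity yields finite length'' is an assertion, not an argument (and $r$-regularity in the usual sense is a \emph{two-sided} ball condition, which you do not have). This is exactly the input the paper imports from \cite{FuGePi12} via the cone property at the boundary, and in a self-contained write-up it would have to be proved or cited.
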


A comparison of the statements of Theorem \ref{mainresult} and Theorem \ref{mainresultcoro} reveals that actually in the second one we are dropping two assumptions, first the domain is no longer required to be Lipschitz, second we ask the weaker condition UIBC instead of complement of positive reach.

The first property is assumed to hold in the proof of the general case to make possible the construction of the geodesic mesh with a control on the asymptotics of the cardinality. In $\R^2$ the boundary of a bounded domain satisfying the UIBC is rectifiable; see \cite{FuGePi12}. Therefore, the geodesic mesh can be created by equally spaced (with respect to arc-length) points. 

On the other hand the role of the second missing property is recovered by a deep fact in measure theory. If a set has the UIBC then then the set of points where the normal space (see Definition \ref{tangentnormal}) has dimension greater or equal to $k$ has locally finite ${d-k}$ Hausdorff measure; \cite{Fed59, MKV12}. In our bi-dimensional (i.e., $d=2$) case this result reads as follow: the normal space has dimension greater or equal to $k=2$ on a subset having $0-$Hausdorff measure equal to $0,$ that is a finite set \cite{Fed59}. Moreover it can be proved that, apart from this small set, the \emph{single valued} normal space is Lipschitz.


\section{Optimal AM for $\mathscr C^{1,1}$ domains by distance function }

As we mentioned above, in \cite{KRRE11} the author conjectures that any real compact set admits an optimal AM, in this section we prove (in Theorem \ref{mainresult2}) that this holds at least for any real compact set $K$ which is the the closure of a bounded $\mathscr C^{1,1}$ domain $\Omega,$ see \ref{c11domain}

We denote by $d_{\complement\Omega}(\cdot)$ the distance function w.r.t. the complement $\complement \Omega$ of $\Omega$, i.e. 
\begin{equation}
d_{\complement\Omega}(x):=\inf_{y\in\complement\Omega}|y-x|,\label{distdef}
\end{equation}
 and by $ \pr_{\complement\Omega} (\cdot)$ the \emph{metric projection} onto $\complement \Omega$ i.e., $ \pr_{\complement\Omega} (x)$ is the set of all minimizer of \eqref{distdef}. 
We continue to use the same notation as in the previous section for the closure and the boundary of $\Omega$, namely
$ X:=\partial \Omega$ and $K:=\overline \Omega.$

Let us give a sketch of the overall geometric construction before giving details.

First for a given $\mathscr C^{1,1}$ domain $\Omega$ we take $0<\delta<2r_\Omega,$ where $r_\Omega$ is the maximum radius of the ball of the uniform interior ball condition satisfied by $\Omega.$ 

We can split $K:=\overline\Omega$ as follows
\begin{eqnarray*}
\overline \Omega&=&K_\delta\cup \overline{\Omega^\delta}\;\text{ where}\\
K_\delta &:=&\{x\in \Omega:d_{\complement\Omega}(x)\leq\delta\}\;\text{and}\\
\Omega^\delta&=&\Omega\setminus K_\delta.
\end{eqnarray*}

To construct an AM of degree $n$ on $\overline \Omega$ we work separately on $K_\delta$ and $\overline{\Omega^\delta}$ to obtain inequalities of the type
\begin{eqnarray*}
\|p\|_{K_\delta}&\leq &\|p\|_{Z_{n,\delta}}+ \frac{1}{\lambda} \|p\|_{K}\;,\lambda>1\text{  and}\\
\|p\|_{\Omega^\delta}&\leq &2\|p\|_{Y_{n,\delta}}+ \frac{2}{\mu} \|p\|_{K}\;,\mu>1,
\end{eqnarray*}
for $p\in \wp^n,$ where $Z_{n,\delta}\subset K_\delta $ and $Y_{n,\delta}\subset \Omega^\delta$ are suitably chosen finite sets.

In the case of $K_\delta$ this is achieved by the trivial observation $x\in K_\delta$ implies $\overline{B(x,\delta)}\subseteq \overline \Omega$ and therefore one can bound any directional derivative of a given polynomial using the univariate Bernstein Inequality (see Theorem \ref{bernsteinth} below). The resulting inequality is a variant of a Markov Inequality with exponent $1$ which is convenient and allow us to build a low cardinality mesh by a modification of the reasoning in \cite{CL08}.

The construction of an AM on  $\overline{\Omega^\delta}$ is more complicated. The resulting mesh is given by points lining on some properly chosen level surfaces of $d_{\complement\Omega}.$ The result is proved using the regularity property of the function  $d_{\complement\Omega}$ in a small tubular neighborhood of $X$ and the Markov Tangential Inequality for the sphere.

\subsection{Bernstein-like Inequalities and polynomial estimates via the distance function.}
For the reader's convenience we recall here the Bernstein Inequality.
\begin{theorem}[Bernstein Inequality]\label{bernsteinth}
 Let $p\in\mathscr{P}^n(\R)$, then for any $a<b\in \R$ we have
\begin{equation}
 |p'(x)|\leq\frac{n}{\sqrt{(x-a)(b-x)}}\|p\|_{[a,b]},\;x\in ]a,b[.
\end{equation}
\end{theorem}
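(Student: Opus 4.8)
The inequality is the classical one-dimensional Bernstein inequality, and the plan is the standard reduction to the trigonometric case. First I would normalise by an affine change of variable: letting $\psi(t):=\frac{b-a}{2}t+\frac{a+b}{2}$, which maps $[-1,1]$ onto $[a,b]$, the composition $p\circ\psi$ belongs to $\mathscr P^n(\R)$ and $\|p\circ\psi\|_{[-1,1]}=\|p\|_{[a,b]}$, so it suffices to prove the estimate for $[a,b]=[-1,1]$, that is $|p'(x)|\le\frac{n}{\sqrt{1-x^2}}\|p\|_{[-1,1]}$ for $x\in(-1,1)$. For this I would pass to the circle: set $q(\theta):=p(\cos\theta)$, which is a trigonometric polynomial of degree at most $n$ (indeed a pure cosine polynomial) with $\|q\|_{\R}=\|p\|_{[-1,1]}$, and differentiate to obtain $q'(\theta)=-\sin\theta\,p'(\cos\theta)$.

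The heart of the matter is then the \emph{trigonometric Bernstein inequality}: $\|q'\|_{\R}\le n\|q\|_{\R}$ for every real trigonometric polynomial $q$ of degree at most $n$. I would deduce it from M.~Riesz's interpolation formula, which writes $q'(\theta)$ as a finite combination $q'(\theta)=\sum_{k=1}^{2n}\lambda_k\,q(\theta+t_k)$ of translates of $q$ sampled at the nodes $t_k=\frac{(2k-1)\pi}{2n}$, with coefficients satisfying $\sum_{k=1}^{2n}|\lambda_k|=n$; taking absolute values then gives the bound at once. (Alternatively one argues by contradiction: if $|q'(\theta_0)|>n$, one compares $q$ with a suitably scaled and phase-shifted $\frac{|q'(\theta_0)|}{n}\sin(n\theta+\alpha)$ and counts sign changes, contradicting the fact that a degree-$n$ trigonometric polynomial has at most $2n$ zeros per period.) Applying this to our $q$ yields $|\sin\theta|\,|p'(\cos\theta)|\le n\|p\|_{[-1,1]}$, i.e., writing $x=\cos\theta$ so that $|\sin\theta|=\sqrt{1-x^2}$, the normalised inequality.

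Finally I would undo the affine normalisation. With $t=\frac{2x-a-b}{b-a}$ one has the elementary identity $(b-a)^2-(2x-a-b)^2=4(x-a)(b-x)$, whence $\sqrt{1-t^2}=\frac{2}{b-a}\sqrt{(x-a)(b-x)}$; combined with $(p\circ\psi)'(t)=\frac{b-a}{2}\,p'(x)$, the factors $\frac{b-a}{2}$ cancel in the normalised estimate and it becomes exactly $|p'(x)|\le\frac{n}{\sqrt{(x-a)(b-x)}}\|p\|_{[a,b]}$. The only non-routine ingredient is the trigonometric Bernstein inequality; everything else is a change of variables. One also notes in passing that the constant is sharp, equality being attained by the rescaled Chebyshev polynomial $T_n$ at the points where $|\sin(n\theta)|=1$.
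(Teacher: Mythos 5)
Your argument is correct and is the standard proof of the classical Bernstein inequality: affine reduction to $[-1,1]$, the substitution $x=\cos\theta$, and the trigonometric Bernstein inequality $\|q'\|_\R\le n\|q\|_\R$ (via M.~Riesz's interpolation formula), followed by undoing the normalisation with the identity $(b-a)^2-(2x-a-b)^2=4(x-a)(b-x)$. Note that the paper does not prove this statement at all --- it is explicitly recalled as a known classical result for the reader's convenience --- so there is no authorial proof to compare against; your write-up supplies a complete and correct one.
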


Let us introduce the following notation illustrated in Figure \ref{elle}.
\begin{eqnarray}
 l(x)&:=&\min_{y\in \pr_{\complement\Omega} (x)}\inf\left\lbrace\lambda>0:y+\lambda\frac{x-y}{|x-y|}\notin \Omega\right\rbrace\;\;x\in \Omega\\
l_\Omega&:=&\inf_{x\in\Omega}l(x).
\end{eqnarray}
\begin{remark}
 In the case when $\Omega$ is a $\mathscr C^{1,1}$ domain one has the estimate $l_\Omega\geq 2r$ where $r<\Reach(\partial\Omega)$ see Definition \ref{positivereach} and thereafter.
\end{remark}

\begin{figure}
\centering
\includegraphics[width=3in]{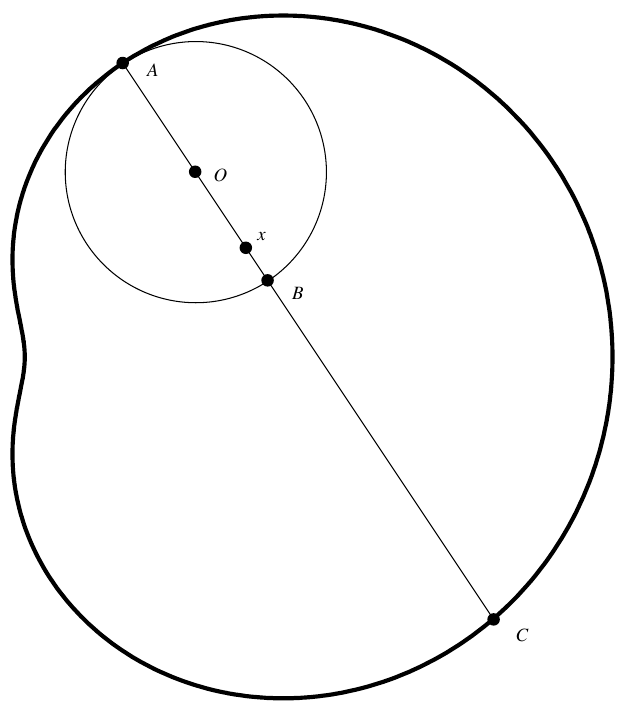}
\caption{Here $A:= \pr_{\complement\Omega} (x)$ and $l(x)=|A-C|\geq |A-B|=2r$ is the length of the shortest segment inside $\Omega$ containing $x$ and having direction $\frac{x- \pr_{\complement\Omega} (x)}{|x- \pr_{\complement\Omega} (x)|}.$}
\label{elle}
\end{figure}

The following consequence of \emph{Bernstein Inequality} will play a central role in our construction.
\begin{proposition}
 Let $\Omega$ be a bounded domain in $\R^d$ and let us introduce the sequence of functions
\begin{equation}
 \phi_n(x):= \begin{cases} \frac{n}{\sqrt{d_{\complement\Omega}(x)(l_\Omega-d_{\complement\Omega}(x))}}, & \mbox{if } d_{\complement\Omega}(x)<l_\Omega \\ \frac{n}{d_{\complement\Omega}(x)}\;\;\;, & \mbox{otherwise} \end{cases}.
\label{phi}
\end{equation}
For any $x\in \Omega$ let $v\in\{\frac{x-y}{|x-y|}:y\in \pr_{\complement\Omega} (x)\}$, then for any $p\in \mathscr P^n(\R^d)$ we have
\begin{equation}
 |\partial_v p(x)|\leq \phi_n(x)\|p\|_K.\label{derivativeestimate}
\end{equation}

If moreover we have $l_\Omega>0$, let us pick any $0<\delta<l_\Omega$ and define the sequence of functions
\begin{equation}
 \phi_{n,\delta}(x):= \begin{cases} \frac{n}{\sqrt{d_{\complement\Omega}(x)(\delta-d_{\complement\Omega}(x))}}, & \mbox{if } d_{\complement\Omega}(x)<\delta \\ \frac{n}{d_{\complement\Omega}(x)}\;\;\;, & \mbox{otherwise} \end{cases}.
\label{phidelta}
\end{equation}
Then the above polynomial estimate \eqref{derivativeestimate} still holds when $\phi_{n}$ is replaced by $\phi_{n,\delta}.$
\label{bernsteinsegment}
\end{proposition}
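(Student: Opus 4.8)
The plan is to reduce the multivariate estimate \eqref{derivativeestimate} to the univariate Bernstein Inequality (Theorem \ref{bernsteinth}) by restricting $p$ to the affine line carrying a metric-projection segment. Fix $x\in\Omega$, a projection point $y\in\pi_{\complement\Omega}(x)$, and set $v:=(x-y)/|x-y|$, $\rho:=d_{\complement\Omega}(x)=|x-y|>0$. First I would record the two elementary geometric facts that drive the argument. (a) Since $d_{\complement\Omega}(x)=\rho$, the open ball $B(x,\rho)$ lies in $\Omega$; in particular $y\in\partial\Omega$ and the symmetric segment $x+[-\rho,\rho]v\subseteq\overline{B(x,\rho)}\subseteq K$. (b) Let $\ell:=\inf\{\lambda>0:\ y+\lambda v\notin\Omega\}$ be the ``escape length'' of the ray from $y$ in direction $v$; then $y+tv\in\Omega$ for every $0<t<\ell$, and by the very definitions of $l(x)$ and of $l_\Omega$ one has $\ell\ge l(x)\ge l_\Omega$. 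Consequently, for every $0<a\le l_\Omega$ the segment $\{y+tv:\ t\in[0,a]\}$ is contained in $\overline\Omega=K$ (the open part lies in $\Omega$, the two endpoints in its closure). Using the identity $S_v(x)=x+[-\rho,l_\Omega-\rho]v=\{y+tv:\ t\in[0,l_\Omega]\}$, fact (b) with $a=l_\Omega$ gives $S_v(x)\subseteq K$, which is the second inequality in \eqref{derivativeestimate}.

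Next I would introduce the univariate polynomial $q(t):=p(y+tv)\in\mathscr P^n(\R)$, for which $q'(t)=\nabla p(y+tv)\cdot v=\partial_v p(y+tv)$, so that $q'(\rho)=\partial_v p(x)$, and $\|q\|_{[0,a]}=\|p\|_{\{y+tv:\,t\in[0,a]\}}$. I then split into the two cases of \eqref{phi}. If $\rho<l_\Omega$, apply Theorem \ref{bernsteinth} to $q$ on $[a,b]=[0,l_\Omega]$ at the interior point $\rho$:
$$|\partial_v p(x)|=|q'(\rho)|\le\frac{n}{\sqrt{\rho\,(l_\Omega-\rho)}}\,\|q\|_{[0,l_\Omega]}=\phi_n(x)\,\|p\|_{S_v(x)}\le\phi_n(x)\,\|p\|_K .$$
If instead $\rho\ge l_\Omega$, so that $\phi_n(x)=n/\rho$, apply Theorem \ref{bernsteinth} to $q$ on $[0,2\rho]$ at its midpoint $\rho$; since $\{y+tv:\ t\in[0,2\rho]\}=x+[-\rho,\rho]v\subseteq K$ by fact (a), the Bernstein weight at the midpoint equals $n/\rho$ and we obtain $|\partial_v p(x)|\le (n/\rho)\,\|p\|_K=\phi_n(x)\,\|p\|_K$.

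For the statement involving $\phi_{n,\delta}$, the hypothesis $l_\Omega>0$ is needed only so that one may choose $0<\delta<l_\Omega$; then the argument above goes through verbatim with $l_\Omega$ replaced by $\delta$, because $\delta<l_\Omega\le\ell$ still forces $S_{v,\delta}(x)=\{y+tv:\ t\in[0,\delta]\}\subseteq K$ via fact (b), and the two cases of \eqref{phidelta} are handled exactly as before (Bernstein on $[0,\delta]$ at $\rho$ when $\rho<\delta$, on $[0,2\rho]$ at the midpoint when $\rho\ge\delta$). I do not expect a genuine obstacle: the only point requiring care is to verify that the segment chosen for the one-dimensional Bernstein Inequality stays inside $K$, which is precisely what the quantities $l(x)$ and $l_\Omega$ (resp.\ the cutoff $\delta$) are designed to encode; everything else is bookkeeping around the restriction $t\mapsto p(y+tv)$.
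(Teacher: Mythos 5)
Your proposal is correct and follows essentially the same route as the paper: restrict $p$ to the metric-projection segment and apply the univariate Bernstein Inequality at the parameter value corresponding to $x$ (at an interior point of $[0,l_\Omega]$, resp.\ $[0,\delta]$, when $d_{\complement\Omega}(x)$ is small, and at the midpoint of a diameter of $B(x,d_{\complement\Omega}(x))\subseteq\Omega$ otherwise). Your explicit verification via $l(x)$ and $l_\Omega$ that the chosen segments lie in $K$ is slightly more careful than the paper's, but the argument is the same.
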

\begin{proof}
Pick $p\in \mathscr P^n(\R^d)$.
 Let us take $x\in\Omega$ such that $d_{\complement\Omega}(x)<l_\Omega.$ We denoted by $S_v(x)$ the segment $x+[-d_{\complement \Omega}(x),l_\Omega-d_{\complement \Omega}(x)]v,$ where $v$ is as above and $x\in S_v(x)$ due to $d_{\complement\Omega}(x)<l_\Omega.$ The restriction of $p$ to this segment is an univariate polynomial $q(\xi):=p(x+v\xi)$ of degree not exceeding $n$, then we can use the Bernstein Inequality \ref{bernsteinth} to get
\begin{equation*}
 \left|\frac{\partial q}{\partial\xi}(\xi)\right|\leq\frac{n}{\sqrt{(\xi+d_{\complement\Omega}(x))(l_\Omega-d_{\complement\Omega}(x)-\xi)}}    \|p\|_{S_v(x)},
\end{equation*}
evaluating at $\xi=0$ we get
\begin{equation}
 |\partial_v p(x)|\leq \frac{n\|p\|_{S_v(x)}}{\sqrt{d_{\complement\Omega}(x)(l_\Omega-d_{\complement\Omega}(x))}} \leq \frac{n\|p\|_K}{\sqrt{d_{\complement\Omega}(x)(l_\Omega-d_{\complement\Omega}(x))}} ,
\label{segmentestimate}
\end{equation}
 thus establishing the first case of \eqref{phidelta}.

Let $x$ be such that $d_{\complement\Omega}(x)\geq l_\Omega$. Notice that $B(x,d_{\complement\Omega}(x))\subseteq\Omega$ and hence $\forall \eta\in \mathbb S^{d-1}$ (the standard unit $d-1$ dimensional sphere) we can pick a segment in the  direction of $\eta$ having length $d_{\complement\Omega}(x)$ lying in $K$ and having $x$ as midpoint.
The Bernstein Inequality gives
\begin{equation}
 \label{ballestimate}
|\partial_v p(x)|\leq\max_{\eta\in \mathbb S^{d-1}}|\partial_\eta p(x)|\leq \frac{n}{d_{\complement\Omega}(x)}\|p\|_{B(x,d_{\complement\Omega}(x))}\leq \frac{n}{d_{\complement\Omega}(x)}\|p\|_K.
\end{equation}

The last statement follows directly by the special choice of $\delta<l_\Omega$. The right hand side    in \eqref{phidelta} dominates (case by case) the r.h.s.    in \eqref{phi} when cases are chosen accordingly to \eqref{phidelta}. 
\Square\end{proof}
 Actually the above proof proves also the following corollary, it suffices to take \eqref{phidelta} and substitute $\frac{n}{d_{\complement\Omega}(x)}$ by $\frac{n}{\delta}$ in the second case.

\begin{cor}
 Let $\Omega$ be an open bounded domain and $\delta$ a positive number such that $K_\delta:=\{x\in \Omega:d_{\complement \Omega}(x)\geq\delta\}\neq \emptyset$. Then for any $v\in \mathbb S^{d-1}$ we have $\forall p\in \mathscr P^n(\R^d)$
\begin{equation}
 \|\partial_v p\|_{K_\delta}\leq \frac{n}{\delta}\|p\|_K.
\label{Kdeltaestimate}
\end{equation}
\end{cor}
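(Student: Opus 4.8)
The plan is to observe that this is an immediate specialization of the second case (the ball estimate) in the proof of Proposition \ref{bernsteinsegment}. First I would fix $x\in K_\delta$ and an arbitrary direction $v\in\mathbb S^{d-1}$. Since by definition $d_{\complement\Omega}(x)\geq\delta$, no point of $\complement\Omega$ lies within distance $\delta$ of $x$, hence the closed ball $\overline{B(x,\delta)}$ is contained in $\overline\Omega=K$; in particular the whole segment $x+[-\delta,\delta]v$ lies in $K$.

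Next I would restrict $p$ to that segment: the function $q(t):=p(x+tv)$, $t\in[-\delta,\delta]$, is a univariate polynomial of degree at most $n$. Applying the Bernstein Inequality (Theorem \ref{bernsteinth}) on the interval $[-\delta,\delta]$ and evaluating at the midpoint $t=0$ gives
\[
|\partial_v p(x)|=|q'(0)|\leq\frac{n}{\sqrt{(0-(-\delta))(\delta-0)}}\,\|q\|_{[-\delta,\delta]}=\frac{n}{\delta}\,\|p\|_{x+[-\delta,\delta]v}\leq\frac{n}{\delta}\,\|p\|_K,
\]
where the last inequality uses $x+[-\delta,\delta]v\subset K$. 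Taking the supremum over $x\in K_\delta$ yields the claimed estimate.

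Alternatively --- and this is what the remark preceding the statement alludes to --- one may simply reuse inequality \eqref{ballestimate} from the proof of Proposition \ref{bernsteinsegment}, which applies here because $d_{\complement\Omega}(x)\geq\delta>0$ forces $B(x,d_{\complement\Omega}(x))\subseteq\Omega\subseteq K$; since $\tfrac{n}{d_{\complement\Omega}(x)}\leq\tfrac{n}{\delta}$ on $K_\delta$, one obtains $|\partial_v p(x)|\leq\tfrac{n}{d_{\complement\Omega}(x)}\|p\|_K\leq\tfrac{n}{\delta}\|p\|_K$ directly, and then takes the supremum.

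I expect essentially no obstacle in this argument: it is a one-line consequence of Bernstein's inequality along segments contained in $K$. The only point meriting a word of care is the boundary case $d_{\complement\Omega}(x)=\delta$, where one must pass to $\overline\Omega$ rather than $\Omega$ in order to keep the segment (or ball) inside the set $K$ over which $\|p\|_K$ is measured; this is harmless since $K=\overline\Omega$ is closed.
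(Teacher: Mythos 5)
Your argument is correct and is exactly the paper's intended proof: the corollary is obtained by specializing the ball/segment estimate \eqref{ballestimate} from Proposition \ref{bernsteinsegment} (Bernstein on the diameter segment $x+[-\delta,\delta]v\subset\overline{B(x,\delta)}\subseteq K$, evaluated at the midpoint) and using $\tfrac{n}{d_{\complement\Omega}(x)}\leq\tfrac{n}{\delta}$ on $K_\delta$. Your remark about the boundary case $d_{\complement\Omega}(x)=\delta$ requiring $K=\overline\Omega$ rather than $\Omega$ is a correct and harmless refinement.
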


 We introduce the following in the spirit of \cite{V12}. Let us denote by $ds(\cdot)$ the standard length measure in $\R^d$.

\begin{proposition}
 Let $\Omega$ be a bounded domain in $\R^d$ such that $l_\Omega>0$ and let $0<\delta\leq l_\Omega$. Then

\begin{enumerate}[(i)]
\item for any $x\in \Omega$ the map 
\begin{equation*}
  \pr_{\complement\Omega} (x)\ni y\mapsto \int_{[y,x]}\phi_{n,\delta}(\xi) ds(\xi)
\end{equation*}
is constant, let $F_{n,\delta}(x)$ be its value.
\item We have
\begin{equation}
 F_{n,\delta}(x)=\begin{cases} n\arcos(1-\frac{2d_{\complement\Omega}(x)}{\delta}), & \mbox{if } d_{\complement\Omega}(x)<\delta \\ n\left(\pi+\ln{\frac{d_{\complement\Omega}(x)}{\delta}}\right)\;\;\;, & \mbox{otherwise}. \end{cases}
\label{integral}
\end{equation}
In particular $F_{n,\delta}$ extends continuously to $\overline\Omega$.
\item $F_{n,\delta}$ is constant on any level set of $d_{\complement\Omega}(\cdot)$ and $\sup_{\Omega\setminus K_\delta}F_{n,\delta}=n\pi$.

Let us set $a_{n,\delta}^i:=\frac{i n \pi}{ m_n}$ where $i=0,1,\dots m_n$ and $m_n$ is any positive integer greater than $2 n\pi$, we denote by $\Gamma_{n,\delta}^i$ the $a_{n,\delta}^i$-level set of $F_{n,\delta}$.
\item  We have
\begin{eqnarray*}
 \Gamma_{n,\delta}^i&=&\{x\in K:d_{\complement \Omega}(x)=d_{n,\delta}^i\}\;\;\text{, where}\\
d_{n,\delta}^i&:=&\frac{\delta}{2}\left(1-\cos\left(\frac{i\pi}{m_n}\right)\right).
\end{eqnarray*}
\item Let $\Gamma_{n,\delta}:=\cup_{i=0}^{m_n}\Gamma_{n,\delta}^i,$ then for any $p\in \mathscr P^n(\R^d)$ we have
\begin{equation}
 \|p\|_K\leq \max\{ 2\|p\|_{\Gamma_{n,\delta}}, \|p\|_{K_\delta} \}.\label{picewiseestimate}
\end{equation}
\end{enumerate}
\label{preimage}
\end{proposition}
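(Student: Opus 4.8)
The whole proposition rests on one geometric fact, which I would isolate first: $d_{\complement\Omega}$ is \emph{affine along metric projection rays}. Precisely, if $y\in\pi_{\complement\Omega}(x)$ and $\xi$ lies on the segment $[y,x]$, then $d_{\complement\Omega}(\xi)=|\xi-y|$ and $y\in\pi_{\complement\Omega}(\xi)$: the inequality $d_{\complement\Omega}(\xi)\le|\xi-y|$ is trivial, while a point $z\in\complement\Omega$ with $|\xi-z|<|\xi-y|$ would give $|x-z|\le|x-\xi|+|\xi-z|<|x-\xi|+|\xi-y|=|x-y|=d_{\complement\Omega}(x)$, a contradiction.

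Given this, parametrize $[y,x]$ by arc length, $\xi(t)=y+tv$ with $v=\frac{x-y}{|x-y|}$ and $t\in[0,d_{\complement\Omega}(x)]$; then $d_{\complement\Omega}(\xi(t))=t$, so $\phi_{n,\delta}(\xi(t))$ depends on $t$ only and $\int_{[y,x]}\phi_{n,\delta}\,ds=\int_0^{d_{\complement\Omega}(x)}\phi_{n,\delta}(\xi(t))\,dt$ does not depend on $y$, which is \emph{(i)}; call the common value $F_{n,\delta}(x)$. For \emph{(ii)} I would just evaluate: for $d:=d_{\complement\Omega}(x)<\delta$, completing the square $t(\delta-t)=(\delta/2)^2-(t-\delta/2)^2$ gives the antiderivative $n\arcsin(\tfrac{2t}{\delta}-1)$, whence $F_{n,\delta}(x)=n\big[\tfrac\pi2+\arcsin(\tfrac{2d}{\delta}-1)\big]=n\arccos(1-\tfrac{2d}{\delta})$; for $d\ge\delta$, splitting $\int_0^{d}=\int_0^{\delta}+\int_{\delta}^{d}$ produces $n\pi+n\ln(d/\delta)$; continuity on $\overline\Omega$ follows since $d_{\complement\Omega}$ is continuous, the two branches agree at $d=\delta$, and $F_{n,\delta}\to0$ on $\partial\Omega$. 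Parts \emph{(iii)}--\emph{(iv)} then read off from formula \eqref{integral}: $F_{n,\delta}$ is a function of $d_{\complement\Omega}$ alone, hence constant on its level sets, with $\sup_{\{d_{\complement\Omega}<\delta\}}F_{n,\delta}=n\pi$ (the value approached as $d_{\complement\Omega}\to\delta^-$); and $d\mapsto F_{n,\delta}$ is a continuous strictly increasing bijection of $[0,\delta]$ onto $[0,n\pi]$, so --- using $m_n>2n\pi$ to guarantee $a_{n,\delta}^i=\tfrac{in\pi}{m_n}\in[0,n\pi]$ --- inverting the relation $n\arccos(1-\tfrac{2d}{\delta})=a_{n,\delta}^i$ gives $\Gamma_{n,\delta}^i=\{x\in K:d_{\complement\Omega}(x)=\tfrac\delta2(1-\cos\tfrac{i\pi}{m_n})\}$, i.e. \emph{(iv)}.

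For \emph{(v)} fix $p\in\mathscr P^n(\R^d)$ and pick $x_0\in K$ attaining $\|p\|_K$. If $d_{\complement\Omega}(x_0)\ge\delta$ then $x_0\in K_\delta$ and $\|p\|_K\le\|p\|_{K_\delta}$. Otherwise, with $y\in\pi_{\complement\Omega}(x_0)$ and $v=\frac{x_0-y}{|x_0-y|}$, I would \textbf{reparametrize the projection ray by $F_{n,\delta}$}: set $\gamma(s):=y+\tfrac\delta2\big(1-\cos\tfrac sn\big)v$, $s\in[0,F_{n,\delta}(x_0)]$, so that $d_{\complement\Omega}(\gamma(s))=\tfrac\delta2(1-\cos\tfrac sn)$ (affineness again), $F_{n,\delta}(\gamma(s))=s$, $\gamma(F_{n,\delta}(x_0))=x_0$, and $y\in\pi_{\complement\Omega}(\gamma(s))$ with $v=\frac{\gamma(s)-y}{|\gamma(s)-y|}$. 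Proposition~\ref{bernsteinsegment} then yields $|\partial_v p(\gamma(s))|\le\phi_{n,\delta}(\gamma(s))\|p\|_K$, and the point of the reparametrization is the exact cancellation $\phi_{n,\delta}(\gamma(s))\,|\gamma'(s)|=\tfrac{2n}{\delta\sin(s/n)}\cdot\tfrac\delta{2n}\sin(s/n)=1$, so $\big|\tfrac{d}{ds}p(\gamma(s))\big|\le\|p\|_K$. Letting $a_{n,\delta}^j$ be the largest mesh value $\le F_{n,\delta}(x_0)$ --- whence $F_{n,\delta}(x_0)-a_{n,\delta}^j<\tfrac{n\pi}{m_n}<\tfrac12$ and $\gamma(a_{n,\delta}^j)\in\Gamma_{n,\delta}^j\subseteq\Gamma_{n,\delta}$ by \emph{(iv)} --- and integrating from $a_{n,\delta}^j$ to $F_{n,\delta}(x_0)$ gives $\|p\|_K=|p(x_0)|\le|p(\gamma(a_{n,\delta}^j))|+\tfrac12\|p\|_K\le\|p\|_{\Gamma_{n,\delta}}+\tfrac12\|p\|_K$, i.e. $\|p\|_K\le2\|p\|_{\Gamma_{n,\delta}}$; together with the first case this is \eqref{picewiseestimate}.

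The one genuinely substantive ingredient is the affineness of $d_{\complement\Omega}$ along projection rays --- it makes $F_{n,\delta}$ well defined and explicitly computable, and it is what lets $y$ serve as projection of the entire ray in \emph{(v)} --- while the step most likely to be fiddly is spotting that one must integrate the Bernstein bound against $F_{n,\delta}$ rather than Euclidean arc length, so that the $\phi_{n,\delta}$ and $|\gamma'|$ factors cancel; the remaining computations (two elementary integrals, monotonicity of $F_{n,\delta}$, and the bookkeeping with the $a_{n,\delta}^i$) are routine.
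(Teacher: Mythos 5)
Your proof is correct and follows essentially the same route as the paper: the affineness of $d_{\complement\Omega}$ along projection rays for \emph{(i)}--\emph{(iv)}, and for \emph{(v)} the integration of the Bernstein bound of Proposition~\ref{bernsteinsegment} along the ray from the nearest level set $\Gamma_{n,\delta}^{i}$ below $x$, using that the integral of $\phi_{n,\delta}$ between consecutive level sets is exactly the mesh spacing $\tfrac{n\pi}{m_n}<\tfrac12$. Your reparametrization of the ray by $F_{n,\delta}$ is only a cosmetic repackaging of the paper's direct estimate of that integral.
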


\begin{figure}[ht]
\centering
\includegraphics[width=4in]{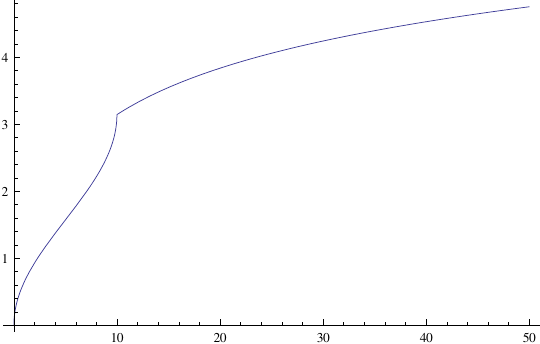}
\caption{A plot of a section of $F_{n,\delta}$ along a segment of metric projection, where $\delta=10$, $n=1$. Abscissa here is the distance from the boundary. }
\label{Fn}
\end{figure}

\begin{proof}

\vskip 1em\noindent(\emph{i}) The function $\phi_{n,\delta}(\cdot)$ depends on its argument only by the distance function, $\phi_{n,\delta}(x)=:g_{n,\delta}(d_{\complement\Omega}(x))$. The length of the segment $[y,x]$ is clearly constant when $y$ varies in the set $\pr_{\complement \Omega}(x)$. 

Moreover for any $y,z\in\pr_{\complement \Omega}(x)$ let us denote by $R_{y,z}$ an euclidean isometry that maps $[y,x]$ onto $[z,x]$, one trivially has $d_{\complement\Omega}(\xi)=d_{\complement\Omega}(R_{y,z}\xi)$ for any $\xi\in[y,x]$. This is because $ \pr_{\complement\Omega} (\xi)\ni y$ for any $\xi\in[x,y]$ by the Triangle Inequality and thus $d_{\complement\Omega}(\xi)=|\xi-y|$.

Thus we have
\begin{eqnarray*}
& & \int_{[y,x]}\phi_{n,\delta}(\xi) ds(\xi)=\int_{[y,x]}g_{n,\delta}(d_{\complement\Omega}(\xi))ds(\xi)\\&=&\int_{[y,x]}g_{n,\delta}(d_{\complement\Omega}(R_{y,z}\xi))ds(\xi)=\int_0^1 g_{n,\delta}\left(d_{\complement\Omega}\left(R_{y,z}\left(y+t\frac{x-y}{|x-y|}\right)\right)\right)dt\\
&=&\int_0^1 g_{n,\delta}\left(d_{\complement\Omega}\left(z+t\frac{z-x}{|z-x|}\right)\right)dt=\int_{[z,x]}\phi_{n,\delta}(\eta) ds(\eta).
\end{eqnarray*}

\vskip 1em\noindent(\emph{ii}) Let us parametrize the segment as $y+s\frac{x-y}{|x-y|}$, then we have
\begin{equation}
 F_{n,\delta}(x)=\begin{cases} \int_0^{d_{\complement\Omega}(x)}\frac{n}{\sqrt{s(\delta-s)}}d s, & \mbox{if } d_{\complement\Omega}(x)<\delta \\ 
\int_0^{\delta}\frac{n}{\sqrt{s(\delta-s)}}d s\,+\int_\delta^{d_{\complement\Omega}(x)}\frac{n}{s}d s\;\;\;, & \mbox{otherwise}. \end{cases}
\end{equation}
The first integral can be solved by substitution: $s=\frac\delta 2 (1-\cos\theta)$. The integration domain becomes $[0,\theta_x]$ where $\frac \delta 2(1-\cos(\theta_x))=d_{\complement \Omega}(x)$, while the integral itself becomes $\int_0^{\theta_x}d\theta=\theta_x$, thus the first case in \eqref{integral} is proven. 

The second integral has an immediate primitive.
$F_{n,\delta}$ depends on $x$ only by the distance function, moreover we notice that
$$\lim_{s\rightarrow \delta^-}\arcos{\left(1-\frac{2 s}{\delta}\right)}=\pi=\lim_{s\rightarrow \delta^+}\left(\pi+\ln{\frac s \delta}\right),$$
hence $F_{n,\delta}$ is a continuous function of the distance function. Since $d_{\complement\Omega}$ is well known to be $1-$Lipschitz $F_{n,\delta}$ is continuous on $\Omega$.

Since $d_{\complement\Omega}$ extends continuously to $\overline\Omega$, then $F_{n,\delta}$ does. Actually we must take $F_{n,\delta}|_{\partial \Omega}\equiv 0.$

\vskip 1em\noindent(\emph{iii}) We already used that $F_{n,\delta}$ depends on $x$ only by the distance function and hence $F_{n,\delta}|_{d_{\complement\Omega}^{\leftarrow}(a)}=\text{constant}$\footnote{We denote by $f^{\leftarrow}(a)$ the inverse image under $f:D\rightarrow \R$ of the number $a\in \Range[f]$, i.e., $\{x\in D: f(x)=a\}$  that, in general, is a set.}, moreover the functions $\arcos{\left(1-\frac{2 s}{\delta}\right)}$ and $\left(\pi+\ln{\frac s \delta}\right)$ are both increasing in $[0,\max_{x\in\overline \Omega}d_{\complement\Omega}(x)],$  see Figure \ref{Fn}, hence any level set of $F_{n,\delta}$ must coincide with a suitable level set of the distance function.

\vskip 1em\noindent(\emph{iv}) The conclusion follows immediately by inverting the equation $$n\arcos\left(1-\frac{2 d_{n,\delta}^i}{\delta}\right)=a_{n,\delta}^i.$$

\vskip 1em\noindent(\emph{v}) Let $p\in \mathscr P^n(\R^d)$ be fixed, let us pick $x\in K$, then two possibilities can occur. In the first case $x\in K_\delta.$ In this case we have $|p(x)|\leq \|p\|_{K_\delta}.$ In the second we suppose $x\notin K_{\delta}$, let us consider $y\in  \pr_{\complement\Omega} (x)$. The segment $[y,x]$ cuts $\Gamma_{n,\delta}^i$ for every $i$ such that $ d_{n,\delta}^i\leq d_{\complement\Omega}(x)$, moreover $[y,x]\cap\Gamma_{n,\delta}^i=\{y^i\}$, due to the monotonicity of $F_{n,\delta}$ along any segment where $d_{\complement\Omega}$ is monotone.

Let $i(x):=\max\{i:d_{n,\delta}^i\leq d_{\complement\Omega}(x)\}$ and let $y^{i(x)+1}$ be the unique intersection of $\Gamma_{n,\delta}^{i(x)+1}$ and the ray starting from $x$ and having direction $\frac{x-y}{|x-y|}$. 

Let $s(\cdot)$ be the arc length parametrization of the segment $[y^{i(x)},y^{i(x)+1}]$ now we have
\begin{eqnarray*}
 |p(x)|&\leq&|p(y^{i(x)})|+ \int_0^{s^{-1}(x)} \left|\frac{ \partial ( p\circ s ) }{\partial t} (t)\right|dt\\
&\leq & |p(y^{i(x)})| + \int_0^1 \left|\frac{ \partial ( p\circ s ) }{\partial t} (t)\right|dt \\
&=& |p(y^{i(x)})| + \int_0^1\|p\|_K \phi_{n,\delta}(s(t)) dt\\&=&|p(y^{i(x)})| + \int_{[y^{i(x)},y^{i(x)+1}]} \|p\|_K\phi_{n,\delta}(\xi) ds(\xi)\\
&\leq&|p(y^{i(x)})| +\frac{\|p\|_K}{m_n} \int_{[y^0,y^{m_n}]}\phi_{n,\delta}(\xi) ds(\xi)\\
&\leq& \|p\|_{\Gamma_{n,\delta}^{i(x)}}+\frac{F_{n,\delta}(y^{m_n})}{m_n}\|p\|_K\leq\|p\|_{\Gamma_{n,\delta}^{i(x)}}+\frac 1 2 \|p\|_K,
\end{eqnarray*}
where we used \eqref{derivativeestimate} in the third line while the special choice of $a_{n,\delta}^i$ (and thus $y^i$) as equally spaced points in the image of $F_{n,\delta}$ and the choice of $m_n>2 n \pi$ has been used in the last two lines.

To conclude we take the maximum of the above estimates among $x\in K$ thus letting $i$ varying among $0,1,\dots,m_n-1$ and considering both cases $x\in K_\delta$ and $x\notin K_\delta.$ 
\Square\end{proof}

\begin{proposition}\label{manifold}
 Let $\Omega$ be a bounded $\mathscr C^{1,1}$ domain, $0<r<\Reach(\partial\Omega)$ $0<\delta\leq r$ and let $m_n>2n\pi$, then
\begin{enumerate}[(i)]
 \item For any $i=1,\dots m_n$ $\Gamma_{n,\delta}^i$ is a $\mathscr C^{1,1}$ hypersurface.
\item For any $p\in \mathscr P^n(\R^d)$ any $x\in \Gamma_{n,\delta}^i$ and any $v\in \mathbb S^{d-1}\cap \mathcal T_x\Gamma_{n,\delta}^i\;\text{ where }i=0,1,\dots,m_n$ we have
\begin{equation}
|\partial_v p(x)| \leq 
\begin{cases}
\frac{n}{\delta}\|p\|_K                                          & i=0\\
\frac{2n}{\delta}\|p\|_K  & i=1,2,\dots,m_n
                       \end{cases}.
\label{buccia}
\end{equation}
\end{enumerate}
\label{tangestimatepita}
\end{proposition}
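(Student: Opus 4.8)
The plan is to realize each $\Gamma_{n,\delta}^i$ as a regular level set of the distance function $d_{\complement\Omega}$ lying inside a one-sided tubular neighborhood of $\partial\Omega$, and then, at each point, to fit an interior ball tangent to $\Gamma_{n,\delta}^i$ of radius at least $\delta/2$ and invoke the degree-$1$ tangential Markov inequality on a ball, inequality \eqref{MarkovBall}.

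For (i), Proposition \ref{preimage} gives $\Gamma_{n,\delta}^i=\{x\in K:d_{\complement\Omega}(x)=d_{n,\delta}^i\}$ with $d_{n,\delta}^i=\frac\delta2(1-\cos(i\pi/m_n))\in(0,\delta]$ for $i=1,\dots,m_n$; since $\delta\le r<\Reach(\partial\Omega)$, each such level set sits in the tube $T:=\{x\in\Omega:0<d_{\complement\Omega}(x)<\Reach(\partial\Omega)\}$. On $T$ the metric projection $\pi_{\complement\Omega}$ is single valued and, for a $\mathscr C^{1,1}$ domain, $d_{\complement\Omega}$ is of class $\mathscr C^{1,1}$ with $\nabla d_{\complement\Omega}(x)=\nu(\pi_{\complement\Omega}(x))$ of unit length, $\nu(y)$ denoting the inner unit normal of $\partial\Omega$ at $y$; I would simply quote this standard regularity of the (oriented) distance function near a $\mathscr C^{1,1}$ boundary from the references and the appendix. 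Hence $d_{n,\delta}^i$ is a regular value and, by the $\mathscr C^{1,1}$ implicit function theorem, $\Gamma_{n,\delta}^i$ is a $\mathscr C^{1,1}$ hypersurface with $\mathcal T_x\Gamma_{n,\delta}^i=\nabla d_{\complement\Omega}(x)^\perp=\nu(\pi_{\complement\Omega}(x))^\perp$; for $i=0$, $\Gamma_{n,\delta}^0=\partial\Omega$ is $\mathscr C^{1,1}$ by hypothesis with $\mathcal T_x\Gamma_{n,\delta}^0=\nu(x)^\perp$.

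For (ii), fix $p\in\mathscr P^n(\R^d)$ and $x\in\Gamma_{n,\delta}^i$, and put $y:=\pi_{\complement\Omega}(x)\in\partial\Omega$ (with $y:=x$ when $i=0$), $\nu:=\nu(y)$ and $t:=d_{n,\delta}^i=|x-y|\in[0,\delta]$, so $x=y+t\nu$ and, by (i), $\mathcal T_x\Gamma_{n,\delta}^i=\nu^\perp$. The uniform interior ball condition characterizing $\mathscr C^{1,1}$ domains, valid here since $r<\Reach(\partial\Omega)$, gives $\overline{B(y+r\nu,r)}\subseteq\overline\Omega=K$. With $\rho:=r-t/2$ one has the elementary inclusion
\[
\overline{B(x+\rho\nu,\rho)}\subseteq\overline{B(y+r\nu,r)}\subseteq K,
\]
since $|x+\rho\nu-(y+r\nu)|+\rho=\tfrac t2+(r-\tfrac t2)=r$. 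Now $x$ lies on the bounding sphere of $\overline{B(x+\rho\nu,\rho)}$, whose tangent space at $x$ is precisely $\nu^\perp=\mathcal T_x\Gamma_{n,\delta}^i$; since a Euclidean ball of radius $\rho$ satisfies the tangential Markov inequality of degree $1$ with constant $1/\rho$, inequality \eqref{MarkovBall} yields $|\partial_v p(x)|\le\frac n\rho\|p\|_{\overline{B(x+\rho\nu,\rho)}}\le\frac n\rho\|p\|_K$ for every $v\in\mathbb S^{d-1}\cap\mathcal T_x\Gamma_{n,\delta}^i$. Finally, $i=0$ forces $t=0$, hence $\rho=r\ge\delta$; for $i\ge1$ we have $0<t\le\delta\le r$, hence $\rho=r-t/2\ge\delta/2$; inserting these two bounds for $\rho$ gives exactly \eqref{buccia}.

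The only non-elementary ingredient is the $\mathscr C^{1,1}$ regularity of $d_{\complement\Omega}$ on $T$ (with $\pi_{\complement\Omega}$ single valued and $|\nabla d_{\complement\Omega}|\equiv1$ there); granting it, (i) is just the implicit function theorem and (ii) reduces to the one-line ball-fitting computation above plus the known tangential Markov inequality on a ball. I would only take care that $r<\Reach(\partial\Omega)$ is used uniformly, so that both the interior ball condition and the distance-function regularity are available throughout $0\le d_{\complement\Omega}\le\delta$, and note that the degenerate cases (an empty $\Gamma_{n,\delta}^i$, or $t=\delta=r$) are harmless.
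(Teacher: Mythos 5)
Your proof is correct and follows essentially the same route as the paper: part (i) via the $\mathscr C^{1,1}$ regularity of the (oriented) distance function in the tube and the implicit function theorem applied to its regular level sets, and part (ii) by fitting an interior ball tangent to $\Gamma_{n,\delta}^i$ of radius at least $\delta/2$ (the paper takes radius exactly $\delta/2$ inside $B(\pi_{\complement\Omega}(y)+\delta\nu,\delta)$, you take $r-t/2$ inside $B(y+r\nu,r)$ -- the same idea) and invoking the degree-$1$ tangential Markov inequality \eqref{MarkovBall} on that ball.
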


\begin{figure}[ht]
\centering
\begin{tabular}{cc}
\includegraphics[width=2.5in]{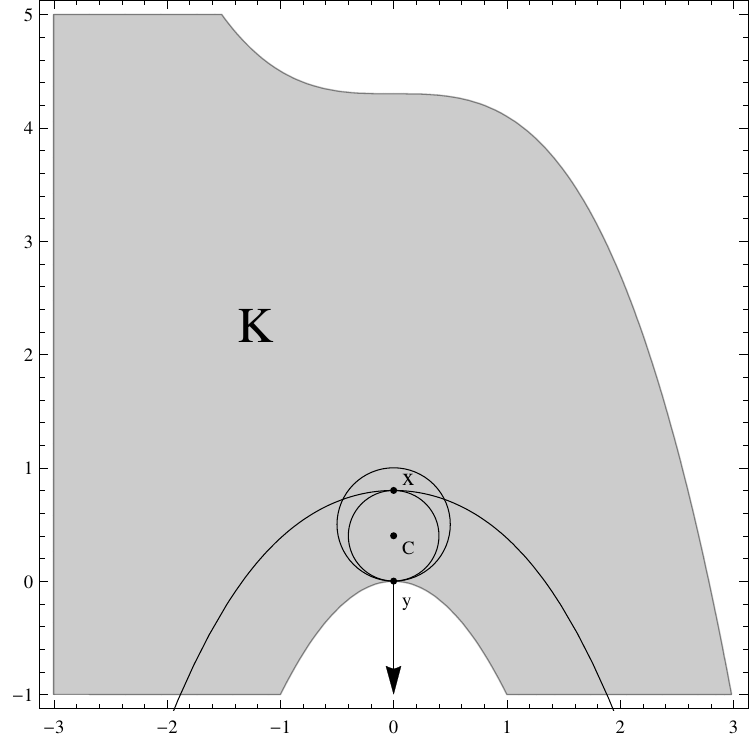} & \includegraphics[width=2.5in]{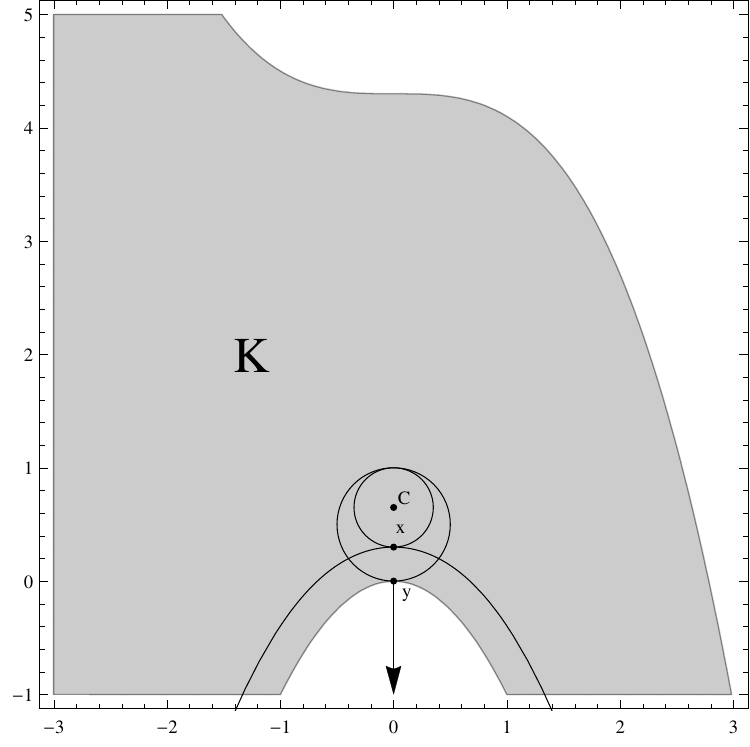}
\end{tabular}
\caption{Different situations occurring in the proof of Proposition \ref{manifold} (\emph{ii}). On the left side the tangent ball at $x$ is chosen outward and on the left side inward, this corresponds respectively to the first and the second case in \eqref{palle}. The arrow represent $\nabla b_{\Omega}(x).$}
\label{balls}
\end{figure}

\begin{proof}
\begin{enumerate}[(i)]
 \item Notice that we have, due to \ref{regularirtyoforienteddistance}, 
$$0 < \min\{\Reach(\Omega),\Reach(\complement\Omega)\}=\Reach(\partial\Omega).$$

If $i>0$ due to \eqref{orienteddistancegradient} and Theorem \ref{regularirtyoforienteddistance}. We have $\forall x\in\Gamma_{n,\delta}^i$
$$\nabla d_{\complement\Omega}(x)=-\nabla b_{\Omega}(x)=\frac{x-\pr_{\partial\Omega}(x)}{|x-\pr_{\partial\Omega}(x)|},$$
moreover this is a Lipschitz function when restricted to $\{|b_\Omega(x)|<\delta\}$ for any $0<\delta<\min\{\Reach(\Omega),\Reach(\complement\Omega)\}.$

Also we have $b_\Omega |_{\overline\Omega}\equiv-d_{\complement\Omega}.$ 

We notice that $\nabla d_{\complement \Omega}(x) \neq 0$, therefore any level-set of $d_{\complement\Omega}$ contained in $\Omega\setminus K_\delta$ is a $\mathscr C^{1,1}$ $d-1$ dimensional manifold by the Implicit Function Theorem.
\item If $i=0$ Theorem \ref{regularirtyoforienteddistance} tells that for any $x$ in $\Gamma_{n,\delta}^i$ we have $B_x:=B(x+\delta\nabla b_{\Omega}(x),\delta)\subseteq\Omega,$ (cfr. figure \ref{balls} point $C1$) moreover $\mathcal T_x\Gamma_{n,\delta}^i=\mathcal T_x\partial B_x.$ Therefore we can apply the Markov Tangential Inequality to the ball $B_x:$ for any polynomial $p\in \wp^n$ and any $u\in\mathcal T_x\Gamma_{n,\delta}^i =\mathcal T_x\partial B_x$ we have
\begin{equation}
|\partial_u p(x)|\leq \frac n \delta \|p\|_{B_x}\leq\frac n \delta \|p\|_{K}.
\end{equation} 
Where the last inequality follows from $\overline B_x\subseteq K.$
 
Now we focus on $i>0.$ Let us take $x\in \Gamma_{n,\delta}^i,$ then $y= \pr_{\complement\Omega} (x)$ $\Rightarrow$ $\nabla b_\Omega(y)=\nabla b_\Omega(x)$ and hence we have $\mathcal T_x \Gamma_{n,\delta}^i=\mathcal T_{y} X, i=0,1,\dots,m_n$

Moreover we notice that \small
\begin{eqnarray}
B_x^i&:=&
\begin{cases}
B\left(y+\frac {d_{n,\delta}^i} 2 \nabla b_{\Omega}(x),\frac {d_{n,\delta}^i} 2\right)\subset\Omega& d_{n,\delta}^i\geq \delta/2\\
B\left(y+(d_{n,\delta}^i+\frac {2\delta-d_{n,\delta}^i} 2) \nabla b_{\Omega}(x),\frac {2\delta-d_{n,\delta}^i} 2\right)\subset\Omega& d_{n,\delta}^i< \delta/2.
\end{cases}\label{palle}\\
\mathcal T_x \Gamma_{n,\delta}^i&=&\mathcal T_x B_x^i.
\end{eqnarray}
This can be figured out by looking at in Figure \ref{balls} where the first occurrence is represented on the left and the second on the right.

Now we notice that the radius of $B_x^i$ can be bounded below uniformly in $i$ by $\delta/2.$ Therefore The Markov Tangential Inequality for the ball gives us the following: $\forall p\in \wp^n$ and $\forall v\in\mathcal T_x \Gamma_{n,\delta}^i,$ $|v|=1$ we have 
\begin{equation*}
 |\partial_v p(x)|\leq \frac{n}{\delta/2}\|p\|_{B_x^i}.
\end{equation*}

Now due to $\mathcal T_x \Gamma_{n,\delta}^i=\mathcal T_x B_x^i$ and $B_x^i\subset \Omega$ we have $\forall p\in \wp^n,v\in \mathcal T_x \Gamma_{n,\delta}^i,|v|=1,\forall i=0,1,m_n $
\begin{equation*}
 |\partial_v p(x)|\leq \frac{n}{\delta/2}\|p\|_{K}.
\end{equation*}
\end{enumerate}
\Square\end{proof}

\subsection{Proof of the main result}
We developed all required tools to state and prove the main result of this paper, Theorem \ref{mainresult2}. The idea of its constructive proof is mixing the technique of Theorem \ref{mainresult} with an improvement of the one being used in \cite{CL08}[Th. 5]. More precisely the hypersurfaces $Z_n$ of Theorem \ref{mainresult} here are replaced  by the level sets $\Gamma_{n,\delta}^i$ which together with the set $K_\delta = \{x\in K:d_{\complement\Omega}(x)\geq \delta\}$ are shown to form a norming set for $K.$ 
\begin{theorem}\label{mainresult2}
 Let $\Omega$ be a bounded $\mathscr C^{1,1}$ domain in $\R^d$, then there exists an optimal admissible mesh for $K:=\overline \Omega$. 
\label{2mainresult}
\end{theorem}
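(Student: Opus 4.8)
The plan is to patch a mesh on the ``inner core'' $K_\delta$ together with meshes on the level sets $\Gamma_{n,\delta}^i$ and then conclude via the norming inequality \eqref{picewiseestimate}. Fix once and for all $0<r<\Reach(\partial\Omega)$ — which is positive since $\Omega$ is $\mathscr C^{1,1}$ — and $0<\delta\le r$; then $l_\Omega\ge 2r>0$, so Propositions \ref{preimage} and \ref{manifold} apply, and we put $m_n:=\lceil 2n\pi\rceil+1=\mathcal O(n)$. The two target estimates are $\|p\|_{K_\delta}\le\|p\|_{Z_{n,\delta}}+\frac14\|p\|_K$ and $\|p\|_{\Gamma_{n,\delta}}\le\|p\|_{Y_{n,\delta}}+\frac14\|p\|_K$, with $\Card Z_{n,\delta}=\mathcal O(n^d)$ and $\Card Y_{n,\delta}=\mathcal O(n^d)$.

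First I would build $Z_{n,\delta}$ on $K_\delta$. Applying the Corollary after Proposition \ref{bernsteinsegment} (inequality \eqref{Kdeltaestimate}) with $\delta/2$ in place of $\delta$ gives the exponent-$1$ Markov-type bound $\|\partial_v p\|_{K_{\delta/2}}\le\frac{2n}{\delta}\|p\|_K$ for all $v\in\mathbb S^{d-1}$ and all $p\in\mathscr P^n(\R^d)$. Let $Z_{n,\delta}$ be the intersection with $K$ of a uniform cubic grid of step $\sim\delta/n$, fine enough that every $x\in K_\delta$ has a grid point $g\in Z_{n,\delta}$ with $|x-g|\le\delta/(8n)$; then $g$ and the whole segment $[x,g]$ lie in $K_{\delta/2}\subset K$, so the Fundamental Theorem of Calculus together with the inequality above gives $|p(x)-p(g)|\le\frac14\|p\|_K$, hence
\begin{equation*}
\|p\|_{K_\delta}\le\|p\|_{Z_{n,\delta}}+\tfrac14\|p\|_K,\qquad \Card Z_{n,\delta}=\mathcal O(n^d).
\end{equation*}
This is the grid argument of \cite{CL08}, but it is the Markov exponent $1$ (instead of the generic $2$) that keeps the cardinality at the optimal $\mathcal O(n^d)$.

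Next I would put a low-cardinality geodesic mesh on each $\Gamma_{n,\delta}^i$. By Proposition \ref{manifold}(i) each $\Gamma_{n,\delta}^i$ is a compact $\mathscr C^{1,1}$ hypersurface, namely the boundary of the $\mathscr C^{1,1}$ domain $\{d_{\complement\Omega}>d_{n,\delta}^i\}$, and the inward normal flow of $\partial\Omega$ at distance $d_{n,\delta}^i$ is a bi-Lipschitz homeomorphism $\partial\Omega\to\Gamma_{n,\delta}^i$ whose constants are bounded independently of $i$, because $d_{n,\delta}^i\le\delta\le r<\Reach(\partial\Omega)$. Consequently Proposition \ref{boundarymesh} applies to all the $\Gamma_{n,\delta}^i$ with one common $\bar h$ and one common implied constant: for $0<h\le\bar h$ there are $Y^i\subset\Gamma_{n,\delta}^i$ with $h_{\Gamma_{n,\delta}^i}(Y^i)\le h$ and $\Card Y^i=\mathcal O(h^{1-d})$, uniformly in $i$. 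Taking $h=h_n:=\delta/(8n)$ and joining any $x\in\Gamma_{n,\delta}^i$ to its geodesic-nearest $y\in Y^i$ by an arc-length geodesic $\gamma$ of length $\le h_n$ supported in $\Gamma_{n,\delta}^i$, the tangential estimate \eqref{buccia} applies to $\gamma'(s)\in\mathbb S^{d-1}\cap\mathcal T_{\gamma(s)}\Gamma_{n,\delta}^i$ and gives $|p(x)-p(y)|\le\frac{2n}{\delta}h_n\|p\|_K=\frac14\|p\|_K$; hence, with $Y_{n,\delta}:=\bigcup_{i=0}^{m_n}Y^i$,
\begin{equation*}
\|p\|_{\Gamma_{n,\delta}}=\max_i\|p\|_{\Gamma_{n,\delta}^i}\le\|p\|_{Y_{n,\delta}}+\tfrac14\|p\|_K,\qquad \Card Y_{n,\delta}=(m_n+1)\,\mathcal O(n^{d-1})=\mathcal O(n^d).
\end{equation*}

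Finally I would assemble. Set $A_n:=Y_{n,\delta}\cup Z_{n,\delta}\subset K$. Combining the two displayed estimates with \eqref{picewiseestimate} of Proposition \ref{preimage},
\begin{equation*}
\|p\|_K\le\max\bigl\{2\|p\|_{\Gamma_{n,\delta}},\ \|p\|_{K_\delta}\bigr\}\le 2\|p\|_{A_n}+\tfrac12\|p\|_K,
\end{equation*}
so $\|p\|_K\le4\|p\|_{A_n}$ for every $p\in\mathscr P^n(\R^d)$; with $\Card A_n=\mathcal O(n^d)$ this exhibits $A_n$ as an optimal admissible mesh (of constant $4$) for $K$. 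I expect the only genuinely delicate point to be the uniformity in $i$ of the mesh construction over the whole family $\{\Gamma_{n,\delta}^i\}_{i=0}^{m_n}$: one must guarantee that the chart radius and Lipschitz constant in Proposition \ref{boundarymesh} can be taken the same for every level set. This is exactly where the \emph{uniform} $\mathscr C^{1,1}$ regularity — the uniform lower bound on $\Reach(\partial\Omega)$ and the ensuing uniform Lipschitz bound on $\nabla d_{\complement\Omega}$ inside the tube $\{d_{\complement\Omega}<r\}$ — enters decisively; the rest is bookkeeping with the constant $1/4$ and a geometric-series absorption.
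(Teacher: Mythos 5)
Your proposal is correct and follows essentially the same route as the paper: the same splitting of $K$ into $K_\delta$ and the level sets $\Gamma_{n,\delta}^i$, the grid argument with the exponent-$1$ bound \eqref{Kdeltaestimate} on $K_\delta$, geodesic meshes on the $\Gamma_{n,\delta}^i$ controlled by the tangential estimate \eqref{buccia}, and assembly via \eqref{picewiseestimate}; even the uniformity issue you flag is resolved in the paper exactly as you suggest, by transporting the boundary mesh of Proposition \ref{boundarymesh} along the (uniformly Lipschitz) normal maps $f_i(x)=x+d_{n,\delta}^i\nabla b_\Omega(x)$. The only difference is that you fix the absorption constants to $1/4$ where the paper keeps free parameters $\lambda,\mu$.
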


\begin{proof}
Notice that we have $0 < \min\{\Reach(\Omega),\Reach(\complement\Omega)\}=\Reach{\partial\Omega}$ due to \ref{regularirtyoforienteddistance} we fix $\delta\leq r<\Reach{\partial\Omega}$ 

Let us recall the above notation
\begin{eqnarray*}
 K_\delta &:=& \{x\in K:d_{\complement\Omega}(x)\geq \delta\},\\
\Gamma_{n,\delta} &:=&  \cup_i \Gamma_{n,\delta}^i\text{ where}\\
\Gamma_{n,\delta}^i &:=& \{x\in K:d_{\complement\Omega}(x)= d_{n,\delta}^i\}, \\
d_{n,\delta}^i&:=&\frac{\delta}{2}\left(1-\cos\left(\frac{i\pi}{m_n}\right)\right)\text{ , where we can take}\\
m_n &:=&\lceil 2 n \pi \rceil +1.
\end{eqnarray*}
 
Let $p\in \mathscr P^n(\R^d)$.

\noindent$\bullet$ \textbf{Claim 1.} \emph{For any $\lambda>1$ there exists $Z_{n,\delta,\lambda}\subset K_\delta$ such that
\begin{eqnarray}\label{claim1}
 \|p\|_{K_\delta}&\leq&\|p\|_{Z_{n,\delta,\lambda}}+\frac{1}{\lambda}\|p\|_K\;\text{and}\\
\Card{Z_{n,\delta,\lambda}}&=&O(n^d).
\end{eqnarray}
}
\noindent$\bullet$ \textbf{Proof of Claim 1.} Let us consider for any $\lambda>1$ a mesh $Z_{n,\delta,\lambda} $ such that its fill distance
\begin{equation*}
h(Z_{n,\delta,\lambda})\leq\frac{\delta}{\lambda n+1/2}=:h\text{ , see \eqref{clfilldistance}.}
\end{equation*}

Let us define $Z_{n,\delta,\lambda}\subset K_\delta$ as the intersection of $K$ with a grid $G$ with a step-size $\frac{h}{\sqrt d}$ on a suitable $d$ dimensional cube containing $K.$ It follows that $\Card(Z_{n,\delta,\lambda})=\left(\frac{\sqrt d} h\right)^d=O(n^d).$

Now pick any $x\in K_\delta$ and find $y\in Z_{n,\delta,\lambda}$ such that $|x-y|\leq h$ and define $v:=\frac{x-y}{|x-y|}$ and notice that
\begin{eqnarray*}
& & |p(x)|\\
&\leq& |p(y)|+\left|\int_{0} ^{|x-y|}\partial_v p(x+sv)ds\right|\leq \|p\|_{Z_{n,\delta,\lambda}}+|x-y|\|p\|_{[x,y]}\\
  &\leq&\|p\|_{Z_{n,\delta,\lambda}}+\|\partial_v p\|_{B(K_\delta,h/2)}.
\end{eqnarray*}
Where we used $\min_{\xi\in[x,y]}\dist(\xi,K_\delta)\geq h/2$ due to the Triangle Inequality for the euclidean distance $\dist(\cdot,K_\delta)$ from $K_\delta.$

By the observation $B(K_\delta,h/2)\subseteq K_{\delta-h/2}$ we can apply inequality  \eqref{Kdeltaestimate} where $\delta$ is replaced by $\delta-h/2.$

$$|p(x)|\leq |p(y)|+h \frac{n}{\delta-h/2}\|p\|_K$$

Taking maximum over $x \in K_\delta$ and using the particular choice $h:=\frac{\delta}{\lambda n+1/2}$ we are done.

\noindent$\bullet$ \textbf{Claim 2.} \emph{ For any $2<\mu$ there exist finite sets $Y_{n,\delta}^i\subset\Gamma_{n,\delta}^i$, $i=0,1,..m_n$, such that if we set $Y_{n,\delta}:=\cup_i Y_{n,\delta}^i$ we get}
\begin{eqnarray}\label{claim2}
 \|p\|_{\cup_i\Gamma_{n,\delta}^i}&\leq&\|p\|_{Y_{n,\delta}}+\frac{1}{\mu}\|p\|_K\;\text{and}\\
\Card{Y_{n,\delta}}&=&O(n^d).
\end{eqnarray}

\noindent$\bullet$\textbf{Proof of Claim 2.} Let us pick $Y_{n,\delta}^i\subset\Gamma_{n,\delta}^i$ such that
\begin{equation}
 h_{\Gamma_{n,\delta}^i}(Y_{n,\delta}^i)\leq
\begin{cases}                                 
\frac{\delta}{\mu n}                                          & i=0\\
\frac{\delta}{2\mu n}  & i=1,2,\dots,m_n
                                            \end{cases} \text{ (see Definition \ref{geofill}).}
\label{filldistanceassumption}
\end{equation}

 Now fix any $i\in \{0,1,\dots,m_n\}$, by \eqref{filldistanceassumption} for any $x\in\Gamma_{n,\delta}^i$ there exist a point $y\in Y_{n,\delta}^i$ and a Lipschitz curve\footnote{Notice that $\Gamma_{n,\delta}^i$ are compact $\mathscr C^{1,1}$ hypersurfaces, thus in particular they are locally complete with respect the geodesic distance. Therefore there exists a curve $\gamma$ realizing the infimum in the definition of geodesic fill distance.} $\gamma$ lying in $\Gamma_{n,\delta}^i$, connecting $x$ to $y$ and such that $\Var[\gamma]\leq h_{\Gamma_{n,\delta}^i}(Y_{n,\delta})$ . Let us denote the arclength reparametrization of $\gamma$ by $\tilde \gamma$, then we have
\begin{eqnarray*}
|p(x)|&\leq& |p(y)|+\int_0^{\Var[\gamma]} \ \frac{d(p\circ\tilde\gamma)}{dt}(t) dt\\
&\leq&  \|p\|_{Y_{n,\delta}^i}+h_{\Gamma_{n,\delta}^i}(Y_{n,\delta})\max_{\xi\in \Gamma_{n,\delta},v\in \mathbb S^{d-1}\cap\mathcal T_\xi \Gamma_{n,\delta}^i}|\partial_v p(\xi)|\\
&\leq& \|p\|_{Y_{n,\delta}^i}+\frac 1 \mu \|p\|_K.
\end{eqnarray*}
Here, in the 3rd line, we used the inequality \eqref{buccia}.
Let us take the maximum w.r.t. $x$ varying in $\Gamma_{n,\delta}^i$ and $i$ varying over $\{0,1,\dots,m_n\}$, we obtain $
 \|p\|_{\Gamma_{n,\delta}}\leq \|p\|_{Y_{n,\delta}}+\frac{1}{\mu}\|p\|_K.$

We are left to prove that we can pick $Y_{n,\delta}^i$ such that $\Card(Y_{n,\delta})=O(n^d).$

When $i=0$ Proposition \ref{boundarymesh} ensures ($X$ is a $\mathscr C^{1,1}$ hypersurface and a fortiori is Lipshitz) the existence of such an $Y_{n,\delta}^0$ with $h_{\Gamma_{n,\delta}^0}(Y_{n,\delta}^0)\leq \frac{\delta}{\mu n}$ and $\Card (Y_{n,\delta}^0)=O(n^{d-1})$. Let us study the case $i>0$.

Now let us notice that  by \emph{(v)} in Theorem \ref{regularirtyoforienteddistance} one has\\ $\pr_{\partial\Omega}|_{b_\Omega=\rho}$ is an injective function for any $0<\rho<\Reach(\partial\Omega).$ Since $\nabla b_\Omega$ constant along metric projections we can also notice that $\nabla b_\Omega(x)=\nabla b_\Omega(\pr_{\partial\Omega}(x)).$ Moreover by \emph{(iii)} in Theorem \ref{regularirtyoforienteddistance} if $x\in \Gamma_{n,\delta}^i,$ $y=\pr_{\complement\Omega(x)}$ then 
\begin{eqnarray*}
& & y=\pr_{\complement\Omega}(x)=x-|x- \pr_{\complement\Omega} (x)|\nabla b_\Omega(x)\\
&=&x-d_{n,\delta}^i\nabla b_\Omega(x)=x-d_{n,\delta}^i\nabla b_\Omega(\pr_{\partial\Omega}(y))\\
&=&x-d_{n,\delta}^i\nabla b_\Omega(y).
\end{eqnarray*}

Thus we can introduce the family of inverse maps $f_i:=\left( \pr_{\complement\Omega} |_{\Gamma_{n,\delta}^i}\right)^{-1}$
\begin{eqnarray*}
 f_{i}:\Gamma_{n,\delta}^0&\longrightarrow&\Gamma_{n,\delta}^i\\
x&\longmapsto&x+ d_{n,\delta}^i \nabla b_\Omega(x).
\end{eqnarray*}

Notice that $\nabla b_\Omega|_{\partial\Omega}$ is a Lipschitz function, see Theorem \ref{regularirtyoforienteddistance} \emph{(iii)}. Let us denote $L$ its Lipschitz constant.

Therefore  $\{f_i\}_{i=1,2,\dots,m_n}$ is a family of equi-continuous functions of Lipschitz constant $$\max_{i=1,2,\dots,m_n}(1+Ld_{n,\delta}^i)\leq(1+L\delta).$$

Now the \emph{Area Formula} says that $f_i$ (being $1+ L\delta$ Lipschitz) maps a mesh of $\Gamma_{n,\delta}^0$ with geodesic fill distance $\frac{h}{1+\delta L}$ onto a mesh in $\Gamma_{n,\delta}^i$ having geodesic fill distance bounded by $h$. We already used this property and explained its application in more detail in the proof of Theorem \ref{mainresult}, see \eqref{area} and thereafter.

Thanks to Proposition \ref{boundarymesh} we can pick the mesh $\tilde Y_{n,\delta}^i\subset\Gamma_{n,\delta}^0$ such that $h_{\Gamma_{n,\delta}^0}(\tilde Y_{n,\delta}^i)\leq \frac{\delta}{2\mu n(1+\delta L)}$ with the cardinality bound  $\Card(\tilde Y_{n,\delta}^i)=O(\left(\frac{n}{h}\right)^{d-1})$ where we denote $\frac{\delta}{2\mu(1+\delta L)}$ by $h$. Let us set $Y_{n,\delta}^i:=\{f_i(y),y\in\tilde Y_{n,\delta}^i\}$. Now we can notice that
\begin{equation*}
 \Card (Y_{n,\delta})=\sum_{i=0}^{m_n}\Card Y_{n,\delta}^i=n^{d-1}+\sum_{i=1}^{m_n}O\left(\left(\frac{n}{h}\right)^{d-1}\right)=O(n^d).
\end{equation*}

\noindent$\bullet$ \textbf{Claim 3:} \emph{$A_{n,\delta}:=Y_{n,\delta}\cup Z_{n,\delta,\lambda}$ is an optimal admissible mesh for $K.$}

\noindent$\bullet$ \textbf{Proof of Claim 3.} By the special choice of $\delta<r\leq l_\Omega/2$ we can use jointly \eqref{picewiseestimate}, \eqref{claim1} and \eqref{claim2} and we obtain
\begin{equation*}
 \|p\|_K\leq\max\{ 2\|p\|_{Y_{n,\delta}}+2\frac{1}{\mu}\|p\|_K\;,\; \|p\|_{Z_{n,\delta,\lambda}}+\frac{1}{\lambda}\|p\|_K\}.
\end{equation*}
By the elementary properties of $\max$ we have
\begin{equation}
 \|p\|_K\leq\max\{\frac{2\mu}{\mu-2}\,,\, \frac{1}{\lambda-1}\}\|p\|_{Y_{n,\delta}\cup Z_{n,\delta,\lambda}}.
\end{equation}
Thus  $Y_{n,\delta}\cup Z_{n,\delta,\lambda}=:A_{n,\delta}$ satisfies
\begin{equation}
 \|p\|_K\leq C(\delta,\lambda,\mu)\|p\|_{A_{n,\delta}}\;\forall p\in \mathscr P^n(\R^d)\;\forall n\in \N
\end{equation}
has the correct cardinality order of growth.

\Square\end{proof}

\section{Acknowledgements}
The author would like to thank prof. M. Vianello (Universit\'a degli Studi di Padova) for his constant support, prof. R. Monti (Universit\'a degli Studi di Padova) for many interesting discussions and explanation, prof. Len Bos (Universit\'a di Verona) for his helpfulness and easiness  and prof. A. Kro\'o (Alfr\'ed R\'enyi Institute of Mathematics Hungarian Academy of Sciences Budapest, Hungary) for hints on the specific topic and for sharing his unpublished material, Khay Nguyen and D. Vittone for some interesting suggestions. Last but not the least the author would like to thank the anonymous referees: their extremely valuable comments improved in a relevant way this manuscript.

\appendix
\section{Sets of positive reach}
Here we provide very concisely some essential tools that we use in the proofs of the paper. Of course we do not even try to be exhaustive, since this is far from our aim.

We deal with Federer sets of positive reach, they were introduced in the outstanding article \cite{Fed59}.
\begin{definition}[Reach of a Set]\cite{Fed59}\label{positivereach}
Let $A\subset\R^d$ be any set, we denote by $\pr_A(x)=\{y\in A: |y-x|=d_A(x)\}$ the metric projection onto $A$, where we denoted by $d_A(x):=\inf_{y\in A}|x-y|.$ Moreover let $Unp(A):=\{x\in \R^d: \exists ! y\in A,\; \pr_A(x)=\{y\}\}.$ Then we define 
\begin{eqnarray}
 \Reach(A,a)&:=&\sup_{r>0}\{r: B(a,r)\subseteq Unp(A) \} \text{ for any }a\in A,\\
\Reach(A)&:=&\inf_{a\in A}\Reach(A,a).
\end{eqnarray}
The set $A$ is said to be a set of positive reach if $\Reach(A)>0.$  
\end{definition}

By this definition sets of reach $r>0$ are precisely the subsets of $\R^d$ for which there exists a tubular neighborhood of radius $r$ where the metric projection is unique and moreover this tubular neighborhood is maximal.

This class of sets was introduced by Federer in the study of \emph{Steiner Polynomial} relative to a (very smooth) set, the polynomial that computed at $r>0$ gives the $d$-dimensional  measure of the $r$ tubular neighborhood of the given set. The main interest on such a class of sets is that under this assumption (in place of high degree of smoothness) one can recover the coefficients of Steiner Polynomial as Radon measures, the Curvature Measures.

Sets with positive reach may be seen as a generalization of $\mathscr C^{1,1}$ bounded domains, in fact the latter can be characterized as domains such that the boundary has positive reach, a more restrictive condition. Moreover if $\Omega$ is a domain having positive reach it can be shown that the subset of $\partial \Omega$ where the distance function defines uniquely a normal vector field (as for $\mathscr C^{1,1}$ domains) is ``big'' in the right measure theoretic sense.

However, from our point of view the most relevant feature of sets of positive reach is the one concerning the regularity properties of the distance function $d_A(\cdot).$ They can be found in \cite{Fed59}[Section 4]. If $A$ has positive reach then $d_A(\cdot)$ is differentiable at any point of $\R^d\setminus A$ having unique projection and we have $\nabla d_A(x)=\frac{x-\pr_A(x)}{d_A(x)}$ and this is a Lipschitz function in any set of the type $\{x:0<s\leq d_A(x)\leq r<\Reach(A)\}.$

In the sequel of the paper we need to use a little of tangential calculus on non-smooth structures, so we introduce the following.

\begin{definition}[Tangent and Normal]
 Let $A\subset\R^d$ be\\ any set. Let $a\in A$ then we define respectively the tangent and the normal set to $A$ at the point $a$ as
\small
\begin{eqnarray*}
 \Tan(A,a)&:=&\left\{ u\in \R^d: \forall\epsilon>0\,\exists x\in A:\; |x-a|<\epsilon,\,\left|\frac u {|u|}-\frac{x-a}{|x-a|}\right|<\epsilon\right\}\\
\Nor(A,a)&:=&\left\{ v\in \R^d:\langle v,u\rangle\leq 0\;\forall u\in \Tan(A,a) \right\}.
\end{eqnarray*}
\normalsize
\label{tangentnormal}\end{definition}
Here the idea is to take all possible sequences $x_n\in A$ approaching $a$ and take the limit of $\frac{x_n-a}{|x_n-a|}$. For the normal set in the above definition the $\leq$ is preferred to the equality sign to allow to consider the non-smooth case and to work with more flexibility. The set $\Nor(A,a)$ actually is in general a cone given by the intersection of all half spaces \emph{dual}\footnote{Hereafter the word dual must be intended in the following sense \cite{Fed59}, $u$ is dual to $N\subset\R^d$ iff $\langle u,v\rangle\leq 0$ for any $v\in N.$} to a vector of $\Tan(A,a).$   

The notion of normal vector we introduced should be compared with other possible notions, the most relevant one is that of \emph{proximal calculus.}
\begin{definition}[Proximal Normal]
 Let $A\subset\R^d$ and $x\in \partial A.$ The vector $v\in \mathbb S^{d-1}$ is said to be a proximal normal to $A$ at $x$ (and we write $v\in N_A^P(x)$) iff there exists $r>0$ such that
\begin{equation}
 \left\langle v,\frac{y-x}{|y-x|}\right\rangle\leq\frac{1}{2r}|y-x|,\;\forall y\in \partial A.
\label{balleq}
\end{equation}
\end{definition}
Notice that the inequality \ref{balleq} implies that the boundary of $A$ lies outside of $B(x+r\frac{v}{|v|},r).$  If we focus on the boundary of a closed set the property of having non empty proximal normal set to the complement at each point of the boundary, i.e. $$N_{\complement\Omega}^P(x)\neq\emptyset\;\forall x\in\partial\Omega$$ is known as \textbf{Uniform Interior Ball Condition (UIBC)} and it is usually stated in the following (equivalent) way
\begin{definition}\label{uniforminteriorball}
 Let $\Omega\subset\R^d$ be a domain, suppose that for any $x\in \partial\Omega$ there exists $y\in\Omega$ such that $B(y,r)\cap\complement\Omega=\emptyset$ and $x\in \partial B(y,r).$ Then $\Omega$ is said to admit the uniform Interior Ball Condition.
\end{definition} 
 Such a condition (and some variants) appears in the literature also as External Sphere Condition (w.r.t. the complement of the set) in the context of the study of some properties of Minimum Time function in Optimal Control \cite{MKV12}, while the previous nomenclature is more frequently used in the framework of regularity theory of PDE.

It is worthwile recalling that positive reach is a strictly stronger condition when compared to UIBC. Actually if a set $A$ has positive reach, then it satisfies the UIBC at each point $a$ of its boundary and \emph{in any direction} of $\Nor(A,a).$

We will use several times the following easy fact.
\begin{proposition}\label{reachball}
 Let $A\subset \R^d,$ $\gamma:[0,1]\rightarrow \partial A$ a Lipschitz curve, $r>0$ and let us suppose $\Reach(A)>r.$  Then we have
for a.e. $s\in]0,1[$ there exists $v\in \mathbb S^{d-1}$ such that
\begin{enumerate}[(i)]
\item $B_s:=B(\gamma(s)+rv,r)\subseteq A^c,$
\item $\gamma'(s)\in\mathcal T_{\gamma(s)}B_s.$
\end{enumerate}
\end{proposition}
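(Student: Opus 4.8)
The plan is to reduce the statement to two facts: a Lipschitz curve is differentiable almost everywhere, and the characteristic tangent‑ball property of sets of positive reach. So first I fix $s$ in the full‑measure set where $\gamma$ is differentiable, and show that (i) and (ii) both hold there (the exceptional null set being exactly the non‑differentiability set of $\gamma$). We may assume $A$ closed — positive reach is only meaningful in that setting, and in our applications $A=\complement\Omega$ is closed — so that $\gamma(s)\in\partial A\subseteq A$. To produce $v$, I invoke the properties of sets of positive reach recalled in the appendix (Federer \cite{Fed59}; see Definition \ref{positivereach} and the discussion thereafter): at the boundary point $a:=\gamma(s)$ the cone $\Nor(A,a)$ contains a unit vector $v$, and since $r<\Reach(A)$ the open ball $B_s:=B(a+rv,r)$ is disjoint from $A$, i.e. $B_s\subseteq A^c$. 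Alternatively, and self‑containedly, one chooses $z_k\to a$ with $z_k\notin A$, sets $a_k:=\pi_A(z_k)\to a$ and $v_k:=\nabla d_A(z_k)=\frac{z_k-a_k}{|z_k-a_k|}$, uses $B(a_k+rv_k,r)\subseteq A^c$ (valid since $\pi_A(a_k+rv_k)=a_k$ for $r<\Reach(A)$) and passes to a subsequential limit $v_k\to v$, with $A^c$ open giving $B(a+rv,r)\subseteq A^c$. This already establishes (i), in fact at \emph{every} $s$.

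For (ii) I would argue by a minimisation. Since $A$ is closed, $\partial A\cap B_s=\emptyset$, and because $\gamma$ takes values in $\partial A$ this gives $|\gamma(t)-a-rv|\ge r$ for all $t\in[0,1]$, with equality at $t=s$ (as $|a-a-rv|=r$). Hence $g(t):=|\gamma(t)-a-rv|^2$ attains its global minimum value $r^2$ at $t=s$; since $\gamma$, and therefore $g$, is differentiable at $s$, the first‑order condition reads $g'(s)=-2r\langle\gamma'(s),v\rangle=0$, i.e. $\gamma'(s)\perp v$. As the outward unit normal of $\partial B_s$ at $a$ is $-v$, this says precisely $\gamma'(s)\in v^\perp=\mathcal T_{a}\partial B_s\subseteq\mathcal T_a B_s$, which is (ii) (trivially true if $\gamma'(s)=0$).

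The only genuinely non‑trivial step — and the one place where the positive‑reach hypothesis is essential — is the existence of the tangent ball used above: that $\Nor(A,a)\neq\{0\}$ at the boundary point $a$, and that the radius‑$r$ ball it spans avoids $A$ when $r<\Reach(A)$. For this I would lean on Federer's results on the unique‑projection tube and the normal bundle, or on the limiting construction above (which additionally identifies the limit direction as an element of $\Nor(A,a)$ by closedness of the normal bundle of a set of positive reach). Everything else — Rademacher's a.e.\ differentiability and the one‑variable first‑order condition — is elementary.
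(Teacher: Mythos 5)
Your proof is correct, and for part \emph{(ii)} it takes a genuinely different route from the paper's. The paper reparametrizes by arc length, shows from the definition of the Federer tangent cone that both $\tilde\gamma'(t_0)$ and (by running the curve backwards) $-\tilde\gamma'(t_0)$ lie in $\Tan(A,\gamma(s_0))$, invokes $\Nor(A,\gamma(s_0))\neq\{0\}$ for sets of positive reach, and deduces $\gamma'(s_0)\perp v_0$ from the two dual inequalities $\langle\pm\gamma'(s_0),v_0\rangle\leq 0$; the ball property \emph{(i)} is left implicit in the citation of Federer. You instead establish \emph{(i)} first and explicitly --- indeed at \emph{every} $s$, a small sharpening of the statement --- either by quoting Federer or by the limiting construction $v_k=\frac{z_k-\pi_A(z_k)}{|z_k-\pi_A(z_k)|}$, and then obtain \emph{(ii)} from the elementary observation that $g(t)=|\gamma(t)-(\gamma(s)+rv)|^2$ attains its global minimum $r^2$ at the interior point $t=s$, so $g'(s)=-2r\langle\gamma'(s),v\rangle=0$. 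This one-variable first-order condition replaces the tangent/normal cone machinery and the curve-reversal trick, and it also sidesteps the slightly delicate bookkeeping the paper does with the null set of the arc-length reparametrization, since you apply Rademacher directly to $\gamma$. Your standing assumption that $A$ is closed is harmless: a set of positive reach is necessarily closed (a point of $\overline A\setminus A$ would force $\Reach(A)=0$), and in the paper's application $A=\complement\Omega$ is closed anyway. Both arguments ultimately rest on the same geometric fact --- at each boundary point of a set of reach greater than $r$ there is a unit normal whose radius-$r$ ball misses $A$ --- which you justify correctly.
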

\begin{proof}
Let us consider the arclength re-parametrization $\tilde\gamma$ of $\gamma$ that is a $1-$Lipschitz curve from $[0,\Var[\gamma]]$ to $\support\gamma.$ 
Notice that $\tilde\gamma$, being Lipschitz, is a.e. differentiable in $]0,\Var[\gamma][,$ Let $\Sigma_{\tilde \gamma}$ be the set of singular points of $\tilde\gamma$ and let moreover $t_0$ be a point in $]0,\Var[\gamma][\setminus\Sigma_{\tilde\gamma}.$

First we claim that $\tilde\gamma'(t_0)\in \Tan(A,\tilde\gamma(t_0)).$

By differentiability of $\tilde\gamma$ at $t_0$ we have
\begin{equation}
 \lim_{\begin{array}{cc}t\rightarrow t_0 \\ t\in [0,\Var[\gamma]]\setminus \Sigma_{\tilde\gamma}\end{array}}\frac{\tilde\gamma(t)-\tilde\gamma(t_0)}{t-t_0}=\tilde\gamma'(t_0).
\end{equation}
Thus, recalling that $|\tilde\gamma'(t)|=1\neq 0$ in a neighborhood of $t_0$, we have
\begin{equation*}
 \lim_{\begin{array}{cc}t\rightarrow t_0 \\ t\in [0,\Var[\gamma]]\setminus \Sigma_{\tilde\gamma}\end{array}}\frac{\tilde\gamma(t)-\tilde\gamma(t_0)}{t-t_0}\frac{|t-t_0|}{|\tilde\gamma(t)-\tilde\gamma(t_0)|}=\frac{\tilde\gamma'(t_0)}{|\tilde\gamma'(t_0)|}.
\end{equation*}
Therefore we have
\begin{equation*}
 \lim_{\begin{array}{cc}t\rightarrow t_0 \\ t\in [t_0,\Var[\gamma]]\setminus \Sigma_{\tilde\gamma}\end{array}}\left|\frac{\tilde\gamma'(t_0)}{|\tilde\gamma'(t_0)|} - \frac{\tilde\gamma(t)-\tilde\gamma(t_0)}{|\tilde\gamma(t)-\tilde\gamma(t_0)|}\right|=0.
\end{equation*}
Thus for any $\epsilon>0$ we can build the point $x\in \support \gamma$ of definition \ref{tangentnormal} that realizes the vector $\tilde\gamma'(t_0)$ as a vector of $\Tan(A,a).$ 

Moreover for a.e. $s_0$ in $]0,1[$ the arc length $t_0=t(s_0):=\Var[\gamma_{[0,s_0]}]$ is an element of $]0,\Var[\gamma][\setminus \Sigma_{\tilde\gamma}$ and $\frac{\gamma'}{|\gamma'|}(s_0)=\tilde\gamma'(t_0).$

Now we recall \cite{Fed59} that since $A$ has positive reach and $\gamma(s_0)\in \partial A$ then  $\Nor(A,\gamma(s_0))$ is not $\{0\}.$  Therefore $\exists v_0\neq 0$ in $\R^d$ such that $\langle \gamma'(s_0),v_0\rangle\leq 0.$

Now we can consider $\bar\gamma(s):=\gamma(1-s)$ and $\bar s_0:=1-s_0$ and apply the same reasoning above to get $$0\leq \langle -\gamma'(s_0),v_0\rangle=\langle \bar\gamma'(\bar s_0),v_0\rangle\leq 0.\;\Rightarrow \gamma'(s_0)\in\langle v_0\rangle^\perp.$$
Taking $v=\frac{v_0}{|v_0|}$ we are done.  
\Square\end{proof}

\section{(Oriented) distance function and $\mathscr C^{1,1}$  domains}
Now we switch to the case of a bounded $\mathscr C^{1,1}$  domain in $\R^d$. 
For the reader's convenience we clarify that here we are using the following definition, however several (essentially equivalent) variants are available.
\begin{definition}\label{c11domain} 
Let $\Omega\subset\R^d$ be a domain, then it is said to be a $\mathscr C^{1,1}$ domain iff the following holds.

There exist $r>0$, $L>0$ such that for any $x\in \partial \Omega$ there exist a coordinate rotation $R_x\in SO^d$ and $f_x\in \mathscr C^{1,1}\left(B^{d-1}(0,r),]-r,r[ \right)$ (that is, a differentiable function having Lipschitz gradient) such that
\begin{eqnarray*}
f_x(0)&=&0\\
\nabla f_x (0)&=&0\\
 \|f_x\|_{\mathscr C^{1,1}}&\leq& L\\
x+R_x\Graph{f_x}&=&\partial\Omega\cap (x+R_xB(x,r)),
\end{eqnarray*}
where$ \|f_x\|_{\mathscr C^{1,1}}:=\max\{\sup_D|f|,\sup_D|\nabla f|,\Lip(\nabla f)\}.$
\end{definition}

In the spirit of \cite{Del} and \cite{zodeldist} one may study regularity properties of a domain $\Omega$ comparing it to the smoothness of the \emph{Distance Function} and the \emph{Oriented Distance Function} $$b_\Omega(\cdot):=d_\Omega(\cdot)-d_{\complement\Omega}.$$
We collect all the properties we need of a $\mathscr C^{1,1}$  domain in $\R^d$ in the following theorem. Detailed proofs can be easily provided combining classical results that can be found in \cite{Barb09}[Th. 5.1.9],\cite{Fed59},\cite{ABMM11} and \cite{zodeldist}.

\begin{theorem}\label{regularirtyoforienteddistance}
 Let $\Omega\subset\R^d$ be a $\mathscr C^{1,1}$  bounded domain. Then the following hold.
\begin{enumerate}[(i)]
 \item Both $\Omega$ and $\complement \Omega$ have positive reach, 
$$\Reach(\partial\Omega)=\min\{\Reach(\Omega),\Reach(\complement\Omega)\}.$$
\item For any $0<h<\Reach(\partial\Omega)$ $b_\Omega\in \mathscr C^1\left(U_h(\Omega)\right)$ where $U_h(\Omega):=\{x\in \R^d:-h<b_\Omega(x)<h \}.$
\item For any $x\in U_h(\Omega),$ $0<h<\Reach(\partial\Omega)$
\begin{equation}
 \nabla b_\Omega(x)=-\frac{x-\pr_{\partial\Omega}(x)}{|x-\pr_{\partial\Omega}(x)|},\label{orienteddistancegradient}
\end{equation}
where the right side is well defined also on $\partial \Omega$. Moreover $\nabla b_\Omega$ is a Lipschitz function.
\item For any $x\in \partial \Omega$ we have $\Tan(x,\partial\Omega)=\mathcal T_x\partial\Omega$ and\\ $\Nor(x,\Omega)=\langle\nabla b_\Omega(x)\rangle.$
\item For all $x\in \partial \Omega$ an d for any $r<\Reach(\partial\Omega)$ we have
\begin{eqnarray}
 B(x-r\nabla b_\Omega(x),r)&\subseteq& \Omega\\
 B(x+r\nabla b_\Omega(x),r)&\subseteq& \complement\Omega\label{uibcc11}
\end{eqnarray}
\end{enumerate}
\end{theorem}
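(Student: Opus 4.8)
The plan is to deduce all five items from two inputs: the local $\mathscr C^{1,1}$ graph description of $\partial\Omega$ (Definition~\ref{c11domain}, equivalently the uniform two--sided ball condition of \cite{ABMM11}) and Federer's regularity theory for the distance function of a set of positive reach (\cite{Fed59}[Section~4]); once the first step is in place the rest is largely bookkeeping. For item~(i) I would first pass from the pointwise charts to uniform ones: by compactness of $\partial\Omega$ finitely many graphs $x_k+R_{x_k}\Graph{f_{x_k}}$ with $\|f_{x_k}\|_{\mathscr C^{1,1}}\le L$ and a common radius $r$ cover $\partial\Omega$. On each chart the unit normal field $\nu$ is Lipschitz with constant $\lesssim L$, so the normal map $(y,t)\mapsto y+t\,\nu(y)$ is injective for $|t|<c/L$; since a collision $y_1+t_1\nu(y_1)=y_2+t_2\nu(y_2)$ with small $|t_i|$ forces $y_1,y_2$ into a common chart (both lie within $|t_i|$ of the collision point), this injectivity is global on a tube of radius $\rho_0=\rho_0(L,r,d)>0$, whence $\Reach(\partial\Omega)\ge\rho_0$. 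For the stated identity: if $0<d_{\partial\Omega}(x)<\Reach(\partial\Omega)$ the nearest point of $\overline\Omega$ (when $x\notin\overline\Omega$) resp.\ of $\complement\Omega$ (when $x\in\Omega$) lies on $\partial\Omega$ and equals the single point $\pi_{\partial\Omega}(x)$, while $\pi_{\overline\Omega}(x)=\{x\}=\pi_{\complement\Omega}(x)$ on $\partial\Omega$; hence $B(a,\Reach(\partial\Omega))\subseteq Unp(\overline\Omega)\cap Unp(\complement\Omega)$ for every $a\in\partial\Omega$, so $\min\{\Reach(\Omega),\Reach(\complement\Omega)\}\ge\Reach(\partial\Omega)$. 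Running the same identification backwards, $B(a,R)\subseteq Unp(\overline\Omega)\cap Unp(\complement\Omega)$ implies $B(a,R)\subseteq Unp(\partial\Omega)$, which gives the opposite inequality.

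For items~(ii)--(iii) I would write, on $U_h(\Omega)\setminus\partial\Omega$, $b_\Omega=-d_{\partial\Omega}$ on the side of $\Omega$ and $b_\Omega=+d_{\partial\Omega}$ outside. By \cite{Fed59}[Section~4], $\partial\Omega$ having reach $>h$ gives $d_{\partial\Omega}\in\mathscr C^1(\{0<d_{\partial\Omega}<h\})$ with $\nabla d_{\partial\Omega}(x)=(x-\pi_{\partial\Omega}(x))/d_{\partial\Omega}(x)$ and $\pi_{\partial\Omega}$ Lipschitz there; carrying the sign through yields \eqref{orienteddistancegradient} off $\partial\Omega$, and the right--hand side extends across $\partial\Omega$ to the (outward) unit normal, continuous by the $\mathscr C^1$ charts, so $b_\Omega\in\mathscr C^1(U_h(\Omega))$. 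The Lipschitz bound on $\nabla b_\Omega$ then follows because on any tube bounded away from $\partial\Omega$ the expression $\pm(x-\pi_{\partial\Omega}(x))/d_{\partial\Omega}(x)$ is a Lipschitz composition, while its boundary trace is the unit normal field $(-\nabla f_{x_k},1)/\sqrt{1+|\nabla f_{x_k}|^2}$, Lipschitz since each $\nabla f_{x_k}$ is, and patching across finitely many charts preserves this (this is precisely the $\mathscr C^{1,1}$ tubular--neighbourhood picture of \cite{Barb09,zodeldist,Del}).

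For item~(iv), since $\partial\Omega$ is a $\mathscr C^{1}$ (indeed $\mathscr C^{1,1}$) hypersurface its Federer tangent cone at $x$ coincides with the classical tangent hyperplane $\mathcal T_x\partial\Omega$, and $\Tan(\Omega,x)$ is the closed half--space it bounds on the interior side; dualising as in Definition~\ref{tangentnormal} gives $\Nor(\Omega,x)=\{t\,\nabla b_\Omega(x):t\ge 0\}=\langle\nabla b_\Omega(x)\rangle$. For item~(v), fix $r<\Reach(\partial\Omega)$, $x\in\partial\Omega$ and set $\nu:=\nabla b_\Omega(x)$. The centres $x\pm r\nu$ are at distance $r$ from $\partial\Omega$ with $x$ among their nearest points (the normal property of $\nu\in\Nor(\Omega,x)$, \cite{Fed59}), and since $r<\Reach(\partial\Omega)$ that nearest point is unique, so the open ball $B(x\pm r\nu,r)$ misses $\partial\Omega$. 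Being connected it lies entirely in $\Omega$ or in $\complement\overline\Omega$, and the test point $x\mp t\nu$, $0<t<r$ --- which lies in the ball and, by the local graph, on the interior resp.\ exterior side of $\partial\Omega$ --- identifies which, giving $B(x-r\nabla b_\Omega(x),r)\subseteq\Omega$ and $B(x+r\nabla b_\Omega(x),r)\subseteq\complement\Omega$.

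The main obstacle is item~(i): turning the pointwise $\mathscr C^{1,1}$ data into a \emph{uniform} lower bound for $\Reach(\partial\Omega)$, i.e.\ injectivity of the normal map on a tube of radius $\sim 1/L$ together with the cross--chart compatibility argument. Once positive reach and Federer's differentiation formula are available, items~(ii)--(v) reduce to sign bookkeeping, continuity of the normal field, and an elementary connectedness argument.
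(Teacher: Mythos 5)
Your sketch is correct and follows essentially the same route as the paper, which in fact offers no proof of this appendix theorem but only points to the same classical ingredients you assemble: the uniform $\mathscr C^{1,1}$ charts (equivalently the two--sided ball condition of \cite{ABMM11}) to get positive reach, Federer's regularity theory for $d_{\partial\Omega}$ and $\pi_{\partial\Omega}$ from \cite{Fed59}, and the oriented--distance bookkeeping of \cite{Barb09},\cite{zodeldist}. The one spot worth tightening is the Lipschitz bound for $\nabla b_\Omega$ across $\partial\Omega$: rather than patching the interior estimate (whose constant degenerates as $d_{\partial\Omega}\to 0$) with the boundary trace, use that $\nabla b_\Omega=\nu\circ\pi_{\partial\Omega}$ on the whole tube $U_h(\Omega)$, with $\nu$ Lipschitz on $\partial\Omega$ and $\pi_{\partial\Omega}$ Lipschitz on $\{d_{\partial\Omega}\le h\}$ by Federer's estimate for $h<\Reach(\partial\Omega)$.
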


\bibliographystyle{abbrv}
\bibliography{references}
\end{document}